\newtheorem{theorem}{Theorem}[section]
\newtheorem{lemma}[theorem]{Lemma}
\theoremstyle{definition}
\newtheorem{definition}[theorem]{Definition}
\newtheorem{remark}[theorem]{Remark}
\newtheorem{example}[theorem]{Example}
\newcommand{\R}{\mathbb{R}}
\newcommand{\N}{\mathbb{N}}
\newcommand{\Z}{\mathbb{Z}}
\newcommand{\G}{\mathcal{G}}
\renewcommand{\O}{\mathcal{O}}
\newcommand{\F}{\mathcal{F}}
\newcommand{\D}{\partial}
\newcommand{\cl}[1]{\overline{#1}}
\newcommand{\wh}[1]{\widehat{#1}}
\newcommand{\dif}[1]{\,\mathrm{d}#1}
\newcommand{\ldif}[1]{\frac{\dif{\phantom{#1}}}{\dif{#1}}\,}
\DeclareMathOperator{\sign}{sign}
\newcommand{\MxN}{M\times N}
\newcommand{\f}{\mathfrak{f}}
\newcommand{\g}{\mathfrak{g}}
\newcommand{\h}{\mathfrak{h}}
\newcommand{\vv}{\mathfrak{v}}
\title[Forced oscillations]{Forced oscillations for generalized $\Phi$-Laplacian equations with Carath\'eodory perturbations}
\author[A.\ Calamai]{Alessandro Calamai}
\address{Alessandro Calamai, 
Dipartimento di Ingegneria Civile, Edile e Architettura,
Universit\`{a} Politecnica delle Marche
Via Brecce Bianche
I-60131 Ancona, Italy}%
\email{a.calamai@univpm.it}%
\author[M.P.\ Pera]{Maria Patrizia Pera}
\address{Maria Patrizia Pera,
Dipartimento di Matematica e Informatica ``Ulisse Dini'',
Universit\`a degli Studi di Firenze,
Via S.\ Marta 3, I-50139 Florence, Italy}%
\email{mpatrizia.pera@unifi.it}
\author[M.\ Spadini]{Marco Spadini}
\address{Marco Spadini,
Dipartimento di Matematica e Informatica ``Ulisse Dini'',
Universit\`a degli Studi di Firenze,
Via S.\ Marta 3, I-50139 Florence, Italy}%
\email{marco.spadini@unifi.it}
\keywords{Nonlinear differential equations, $\Phi$-Laplacian, Branches of periodic solutions, Brouwer degree.}
\subjclass[2010]{34C25, 34A09, 34B08, 47H11}
\thanks{The authors are members of the Gruppo Nazionale per l'Analisi Mate\-ma\-tica, la Probabilit\`a e le loro Applicazioni (GNAMPA) of the Istituto Nazionale di Alta Mate\-ma\-tica (INdAM), and
are supported by PRIN 2022 - Progetti di Ricerca di rilevante Interesse Nazionale, ``Nonlinear differential problems with applications to real phenomena'' (Grant Number: 2022ZXZTN2)}
\begin{document}
\begin{abstract}
Using topological methods, we study the structure of the set of forced oscillations of a class of parametric, implicit ordinary differential equations with a generalized $\Phi$-Laplacian type term.
We work in the  Carath\'eodory setting. Under suitable assumptions, involving merely the Brouwer degree in Euclidean spaces, we obtain global bifurcation results. In some illustrative examples we provide a visual representation of the bifurcating set.
\end{abstract}

 \maketitle

 \section{Introduction}

The purpose of this paper is to study the structure of the set of forced oscillations of a class of parametric 
ordinary differential equations (ODEs)
involving a generalized $\Phi$-Laplacian type differential operator.

More specifically, we consider the following two different problems which, although similar, have to be handled separately:
\begin{equation}\label{eq:0-pertb-intronew}
 [\phi\big(\lambda,x(t),x'(t)\big)]'=\lambda f\big(t,x(t),x'(t),\lambda\big),\quad \lambda\geq 0,
\end{equation}
and
\begin{equation}\label{eq:g-pertb-intronew}
 [\phi\big(\lambda,x(t),x'(t)\big)]'=g\big(x(t),x'(t)\big)+\lambda f\big(t,x(t),x'(t),\lambda\big),\quad \lambda\geq 0,
\end{equation}
where $\lambda$ is a real parameter and, for a given open set $U \subseteq \R^n$, the map $g\colon U\times\R^n\to\R^n$ is continuous, whereas the perturbing term $f\colon\R\times U\times\R^n\times[0,\infty)\to\R^n$ is Carath\'eodory and $T$-periodic in the first variable, $T>0$ being given. We study the set of periodic solutions that depend on the parameter $\lambda$. More precisely, see Definition \ref{def:Tforced}, we investigate the set of those pairs $(\lambda,x)$, which we call \emph{$T$-forced pairs} of \eqref{eq:0-pertb-intronew} (resp.\ \eqref{eq:g-pertb-intronew}), where $x$ is a $T$-periodic solution corresponding to $\lambda$ of \eqref{eq:0-pertb-intronew} (resp.\ \eqref{eq:g-pertb-intronew}).  
Our main goal will be to determine conditions ensuring the existence of a ``branch'', consisting of nontrivial (again see Definition \ref{def:Tforced}) $T$-forced pairs emanating for $\lambda=0$ from the set of stationary solutions.

Concerning the mapping $\phi\colon [0,\infty)\times U\times\R^n\to\R^n$ that appears on the left-hand side in \eqref{eq:0-pertb-intronew} and \eqref{eq:g-pertb-intronew}, it is assumed to be continuous and such that $\phi(\lambda,p,\cdot)$ is one-to-one and onto for each $\lambda\in[0,\infty)$ and $p\in U$.
Loosely speaking, we view $\phi(\lambda,p,q)$ as a sum $\phi_0(q)+\lambda \delta(\lambda,p,q)$, where $\phi_0$ is a homeomorphism and  $\delta(\lambda,p,q)$ is a suitable perturbation, see Section \ref{sec-main} for more details.

As a special case when  $n=1$ we recover the  classical $p$-Laplacian operator $\Phi(x):=x|x|^{p-2}$, with $p>1$, or, more generally, the so-called \emph{$\Phi$-Laplacian}: meaning a strictly increasing homeomorphism $\Phi\colon\R\to\R$. Scalar equations with $\Phi$-Laplacian, of the form 
\[
(\Phi(x'(t)))'= \mathfrak{k}(t,x(t),x'(t))
\]
are well-studied since they arise in some applicative models: for example, in non-Newtonian fluid theory,
nonlinear elasticity, diffusion of flows in porous media, theory of capillary surfaces
and, more recently, the modeling of glaciology
(see, e.g., \cite{BGKT,BogRon,PRRFB}).
{}From a mathematical viewpoint, different kinds of boundary conditions can be associated to these equations;
for instance, the existence of heteroclinic solutions can be established using the method of
 upper/lower solutions (see, e.g., \cite{BiCaPa19, cab11, Ca2011H, Cumapa2011, Frigon2013}).
 Concerning the study of periodic solutions, both in the scalar case and for equations in $\R^n$,
topological methods have been employed by many authors.
We cite, for instance, \cite{BeCa08, BeFe24, bm08, BFZ21, FSZ19, RT05}.

Recently, in \cite{capesp-libro} we investigated a similar problem but with the stronger condition that the map
$f$ in \eqref{eq:0-pertb-intronew} and \eqref{eq:g-pertb-intronew} is \emph{continuous}, and not merely Carath\'eodory.
Here we focus on the Carath\'eodory setting so that, with respect to \cite{capesp-libro},  we have to weaken the definition of solution of the ODEs \eqref{eq:0-pertb-intronew} and \eqref{eq:g-pertb-intronew}, cf.\ Definition~\ref{def:sol},
and as a tool we need to use some more recent results 
 obtained in \cite{CaSp24}.
We stress that our results cannot be deduced from the analogous ones in \cite{capesp-libro}.

The approach used in this paper to study the set of $T$-periodic solutions of~\eqref{eq:0-pertb-intronew} and \eqref{eq:g-pertb-intronew} can be summarized as follows. We observe that these two equations, by means of the introduction of an auxiliary variable, can be regarded as equivalent systems in $\R^{2n}$ written in normal form. These systems can be studied, in a similar way as in \cite{capesp-libro}, through earlier results, obtained by the authors with M.\ Furi, about periodically perturbed ODEs on differentiable manifolds in the Carath\'eodory setting, see 
\cite{FP97-car,Spa00}.
In fact here we exploit some very recent results on system of coupled equations on manifolds,
 obtained in \cite{CaSp24}, that in a certain sense extend and unify the aforementioned ones.
Let us point out that the cited results rely upon the topological notion
of degree (also called \textit{Euler characteristic}) of a tangent vector field, see the classical books
\cite{GyP, Hir76, MilTop}.
However, in the present paper all the technicalities are hidden in the proofs so that
our main results require only the well-known Brouwer degree in Euclidean spaces;
no advanced tools from differential topology are needed here.

Our main results, Theorems \ref{th:main1} and \ref{th:main2} below, 
can be described as \emph{global bifurcation results}, or \emph{atypical bifurcation results} in the sense of Prodi-Ambrosetti.
Roughly speaking, we obtain the existence of an unbounded connected set -- a ``branch'' -- made up of 
``nontrivial'' $T$-periodic solution pairs $(\lambda,x)$ of \eqref{eq:0-pertb-intronew} and \eqref{eq:g-pertb-intronew}, resp.,
that emanates from the set of the ``trivial'' ones
-- see Section \ref{sec-main} for more precise definitions. 
To prove the existence of such a branch we need only suitable assumptions on the Brouwer degree of some maps in $\R^n$, 
related with the terms $f$ and $g$ in the right-hand side of \eqref{eq:0-pertb-intronew}
and \eqref{eq:g-pertb-intronew}.
Notice that the Brouwer degree can be computed explicitly; therefore it is possible, in some concrete situations, to verify the validity of those assumptions.

We observe that our results are related to similar ones, concerning the set of $T$-periodic solutions of periodically perturbed ODEs: for the case of differential equations on manifolds we cite the already quoted papers \cite{CaSp24, FP97-car, Spa00}.
Concerning equations in $\R^n$, we mention the recent \cite{BeFe24} in which the authors study a case analogous to \eqref{eq:0-pertb-intronew} but with a different topological tool, namely the coincidence degree developed by J.\ Mawhin.

Moreover, our problems are linked to some class of
differential-algebraic equations, cf. \cite{bicasp, Ca2011B, CaSp2, Spa2010}. In fact, under suitable conditions, also in that case it is possible to relate
such equations to ODEs on manifolds, and to obtain the desired properties only in terms of the Brouwer degree in $\R^n$.

An interesting question, that we postpone to further research, is whether it would be possible to find a bridge between Theorems~\ref{th:main1} and \ref{th:main2}. In fact, Theorem \ref{th:coupled} does something similar for Theorems \ref{th:g-single} and \ref{th:0-single}, see Section \ref{sec:preliminari} and the pertinent discussion in~\cite{Spa2006}. Another attractive line of study, not addressed here, is the investigation of the set of $T$-periodic solutions of equations \eqref{eq:0-pertb-intronew} and \eqref{eq:g-pertb-intronew} when a dependence on delayed arguments is introduced in $\phi$ and $f$.

The paper is organized as follows. In Section \ref{sec:degree} we summarize the concept of Brouwer degree in Euclidean spaces in a slightly extended version. In Section \ref{sec-main} we list the main assumptions and state the global bifurcation results: namely, Theorems \ref{th:main1} and \ref{th:main2}. Section~\ref{sec:preliminari} is devoted to  some preliminary results about
periodic perturbations of autonomous ODEs on manifolds that we present here in a simplified form, suitable for our purposes: then, in Section \ref{se:rearrange}, we rewrite our equations in a way that allows the application of the results of the previous section.
Section \ref{se:proofs} contains the proof of the main results.

In the last Section \ref{sec:graphs} we present some purely illustrative examples, for which we show a possible way to  represent the set of $T$-forced pairs by means of suitable, finite-dimensional projections. More in detail, the visual representation of branches is obtained by plotting respectively, the $C^1$-norm of the solution $x$ in any $T$-forced pair $(\lambda,x)$, and the set of ``starting points'': meaning those triples $(\lambda,p,v)$ such that $(p,v)$ are the initial conditions, at time $t=0$, of a $T$-periodic solution that corresponds to $\lambda$.
In a sense, the numerical examples of Section \ref{sec:graphs} aim to illustrate different possible behaviors of the set of $T$-forced pairs.

\section{A quick refresher on the Brouwer degree in Euclidean spaces}\label{sec:degree}
In our argument the notion of Brouwer degree is only a tool whose use is hidden in the proofs. However, in order to make the paper self-contained, we briefly recall the definition and some properties since we will make use of this concept in a slightly extended version (see e.g.\ \cite{BFPS03,DiMa21,Ni74}). A reader which is already familiar with it can skip this section.

Let $\O$ be an open subset of $\R^s$, $\F$ a continuous $\R^s$-valued map, defined (at least) on the closure $\cl{\O}$ of $\O$, and $q \in \R^s$. We say that the triple $(\F,\O,q)$ is \emph{admissible (for the Brouwer degree)} if the set $\F^{-1}(q)\cap \O$ is compact.

The Brouwer degree is an integer-valued function which to any admissible triple $(\F, \O,q)$ assigns %
$\deg(\F, \O,q)\in\Z$, called the \emph{Brouwer degree of $\F$ in $\O$ with target $q$}.
One can say  informally that this number counts algebraically the solutions in $\O$ of the equation $\F(p) = q$. In fact, one of the properties of this integer-valued function is given by the following \emph{computation formula}, that holds in the smooth regular case. 

Recall that, given a map $\F\colon \O\to \mathcal \R^s$ of class $C\sp{1}$ and a point $p\in \O$, $p$ is said to be a \emph{regular point} (of $\F$) if the differential of $\F$ at $p$, $d\F_p$, is surjective. The points that are not regular are called \emph{critical points}.
The \emph{critical values} of $\F$ are those points of $\R^s$ which belong to the image of the set of the critical points.
Any $q\in \mathcal \R^s$ which is not in the image of the set of the critical points is a \emph{regular value}. So, $q$ is a regular value for $F$ in $\O$ if and only if $\det(d\F_p)\neq 0$, $\forall p \in\F^{-1}(q) \cap \O$. Note that any element of $\R^s$ which is not in the image of $\F$ is a regular value. The following formula is actually the basic definition of the Brouwer degree.

\medskip\noindent
\textbf{Computation formula.}
\label{Computation Formula}
If $(\F,\O,q)$ is admissible, $\F$ is smooth, and $q$ 
is a regular value for $\F$ in $\O$, then
\begin{equation}\label{eq:comp}
\deg(\F,\O,q)\; =\sum_{p \in\F^{-1}(q) \cap \O}  \sign \det (d\F_p).
\end{equation}

Let now $(\G,\O,r)$ be any admissible triple. Then, $\deg(\G,\O,r)$ is defined by taking $(\F,\O,q)$ such that $\F$ and $q$ are ``sufficiently close'' to $\G$ and $r$ and satisfy the assumptions of the Computation Formula and setting
\begin{equation}\label{eq:approx}
\deg(\G,\O,r) := \deg(\F,\O,q).
\end{equation}
It is known that this definition is well-posed.

We point out that the more classical, and well-known, definition of Brouwer degree is usually given in the subclass of triples $(\F,\O,q)$ such that $\F\colon\cl{\O}\to\R^s$ is continuous, $\O$ is bounded and $q \notin \F(\partial \O)$. However, all the standard properties of the degree, such as \emph{homotopy invariance, excision, additivity, existence}, still hold in this more general framework.
For a detailed list of such properties we refer, e.g., to \cite{Hir76,L78,Ni74}.

In this paper the target point $q$ will always be the origin of $\R^s$; therefore, for the sake of simplicity, we will adopt the shorter notation $\deg(\F,\O)$ instead of $\deg(\F,\O,0)$.
In this context, we will say that $p \in\F^{-1}(0)$ is a \emph{nondegenerate zero} (of $\F$) if $\det (d\F_p) \neq 0$; this means, equivalently, that $p$ is a regular point.
Accordingly, we will also say that $(\F,\O)$ is an \emph{admissible pair} (or that the map $F$ is admissible in $\O$) if so is the triple $(F,\O,0)$.
 Observe that $\deg(\F,\O)$ can be regarded also as the degree (or characteristic, or rotation) of the map $F$ seen as a tangent vector field on $\R^s$. This remark is crucial for the 
 extension of the above definitions to the context of differentiable manifolds (see, e.g., \cite{GyP, Hir76, MilTop}).

\section{Setting of the problem and statement of the main results}
\label{sec-main}

As pointed out in the Introduction, we study the following two equations:

\begin{equation}\label{eq:0-pertb-intro}
 [\phi\big(\lambda,x(t),x'(t)\big)]'=\lambda f\big(t,x(t),x'(t),\lambda\big),\quad \lambda\geq 0,
\end{equation}
and
\begin{equation}\label{eq:g-pertb-intro}
 [\phi\big(\lambda,x(t),x'(t)\big)]'=g\big(x(t),x'(t)\big)+\lambda f\big(t,x(t),x'(t),\lambda\big),\quad \lambda\geq 0.
\end{equation}

Throughout the paper we will make the following assumptions on the map $\phi$ that appears in \eqref{eq:0-pertb-intro} and \eqref{eq:g-pertb-intro}.

$\bullet$\  The map $\phi\colon [0,\infty)\times U\times\R^n\to\R^n$ is assumed to be continuous and such that $\phi(\lambda,p,\cdot)$ is one-to-one and onto for each $\lambda\in[0,\infty)$ and $p\in U$. 

The latter condition means that $\phi(\lambda,p,\cdot)$ is invertible with respect to the third variable in the following sense: for each $u \in\R^n$ there exist a unique $q\in \R^n$, depending on $(\lambda,p)$,  such that
\[
u=\phi(\lambda,p,q).
\]
We will denote this ``partial inverse'' as $\psi(\lambda,p,\cdot)$, so that $q=\psi(\lambda,p,u)$. 

In the following we will also denote by ``$\partial_i$'' the partial differentiation with respect to the $i$-th variable.
We assume that:

$\bullet$\  The map $\psi\colon [0,\infty)\times U\times\R^n\to\R^n$ defined above is continuous and, for each $u\in\R^n$ and $p\in U$, the map
$\lambda\mapsto \partial_1 \psi(\lambda,p,u)$
is also continuous.

Finally, we suppose that:

$\bullet$\  $\phi(0,\cdot,\cdot)$ depends only on the third variable; so that, setting $\phi_0(q):=\phi(0,p,q)$ thus we have $\psi(0,p,u)=\phi_0^{-1}(u)$.

This last assumption means that we regard $\phi(\lambda,\cdot,\cdot)$ as a perturbation of some (nonlinear) homeomorphism depending only on the third variable. With this in mind, we view $\phi(\lambda,p,q)$ as a sum $\phi_0(q)+\lambda \delta(\lambda,p,q)$, where $\phi_0$ is a homeomorphism and  $\delta(\lambda,p,q)$ is a perturbation such that the above assumptions are satisfied.

The set of assumptions on $\phi$ may seem complicated but in the more regular case they are quite straightforward, as shown in the following remark.
\begin{remark}\label{re:implicit}
When the  map $\phi$ is of class $C^1$, by the Implicit Function Theorem, we have that the above conditions involving the map $\psi$ are satisfied if, for any $(\lambda,p,q) \in[0,\infty)\times U\times\R^n$, the partial derivative $\D_3\phi(\lambda,p,q)$ is invertible.
\end{remark}


Hereafter by $L^1_T(\R)$ we denote the Banach space, isomorphic to $L^1((0,T),\R)$, of the $L^1_\textrm{loc}$ maps $\xi\colon\R\to\R$ that are $T$-periodic, in the sense that $\xi(t)=\xi(t+T)$ for a.e.\ $t\in\R$.
Recall that a map $f\colon\R\times U\times\R^n\times [0,\infty)\to\R^k$ as in \eqref{eq:0-pertb-intro} and \eqref{eq:g-pertb-intro} is said to be $T$-periodic  Carath\'eodory when:
\begin{itemize}
\item[(F1)] $f(t+T,p,q,\lambda)= f(t,p,q,\lambda)$, $\forall (p,q,\lambda)\in U\times\R^n\times [0,\infty)$ and for a.e.\ $t \in \R$;
\item[(F2)] $t\mapsto f(t,p,q,\lambda)$ is measurable, $\forall (p,q,\lambda)\in U\times\R^n\times [0,\infty)$;
\item[(F3)] $(p,q,\lambda)\mapsto f(t,p,q,\lambda)$ is continuous, for a.e.\ $t \in \R$;
\item[(F4)] for any compact set $K \subseteq U\times\R^n\times [0,\infty)$, there exists a nonnegative function $\sigma_K \in L^1_T(\R)$ such that
$|f(t,p,q,\lambda)|\leq\sigma_K(t)$, $\forall (p,q,\lambda)\in K$ and for a.e.\ $t \in \R$.
\end{itemize}

Let us now clarify the meaning of solution of \eqref{eq:0-pertb-intro} and \eqref{eq:g-pertb-intro}.

\begin{definition}\label{def:sol}
Let $I\subseteq \R$ be an interval, and $\lambda \geq 0$ be given. A function $x\in W_\mathrm{loc}^{1,1}(I)$ is said to be a solution of \eqref{eq:0-pertb-intro} if the function
\[
y\colon I \to \R^n, \quad y(t)=\phi \big(\lambda,x(t),x'(t)\big)
\]
 is in $W_\mathrm{loc}^{1,1}(I)$ too and the equality $y'(t)=\lambda f\big(t,x(t),x'(t),\lambda\big)$ holds for a.e.\ $t \in I$.\\
Analogously, a function $x\in W_\mathrm{loc}^{1,1}(I)$ is said to be a solution of \eqref{eq:g-pertb-intro} if the function $y$ as above is in $W_\mathrm{loc}^{1,1}(I)$ and the equality $y'(t)=g\big(x(t),x'(t)\big)+\lambda f\big(t,x(t),x'(t),\lambda\big)$ holds for a.e.\ $t \in I$.
\end{definition}

We denote by $C^r_T(U)$, $r\in\N\cup\{0\}$, the set of the $U$-valued, $T$-periodic, $C^r$ functions with the standard $C^r$ topology. For simplicity, when $r=0$, we write $C_T(U)$.

\begin{remark}\label{rem:regular}
Observe that when $x$ is a solution of \eqref{eq:0-pertb-intro} or of \eqref{eq:g-pertb-intro}, the function $y$ introduced in Definition \ref{def:sol} is in particular continuous being in $W_\mathrm{loc}^{1,1}(I)$. By the definition of $\psi$ we have $x'(t)=\psi\big(\lambda,x(t),y(t)\big)$ for all $t\in I$ so, $\psi$ being continuous, we find that $x$ is actually $C^1$.
\end{remark}

Remark \ref{rem:regular} justifies the following definition.

\begin{definition}\label{def:Tforced}
A pair $(\lambda,x)\in [0,\infty)\times C^1_T(U)$, such that $x$  is a $T$-periodic solution of \eqref{eq:0-pertb-intro} (resp.\ \eqref{eq:g-pertb-intro}), is said to be a \emph{$T$-forced pair} for \eqref{eq:0-pertb-intro} (resp.\ \eqref{eq:g-pertb-intro}). A $T$-forced pair $(\lambda,x)$ is called \emph{trivial} if $x$ is constant and $\lambda=0$. 
\end{definition}

In this paper we investigate the set of nontrivial $T$-forced pairs of \eqref{eq:0-pertb-intro} and \eqref{eq:g-pertb-intro}. Before stating our main results we need some further notation.
Let $\Omega$ be an open subset of $[0,\infty)\times C^1_T(U)$. Define the open subset $\Omega_U$ of $U$ as follows:
\[
\Omega_{U} = \big\{ p\in U: (0,\cl{p})\in\Omega\big\}.
\]
where  $\cl{p}$ denotes the constant function $p(t)=p $ for all $t\in \R$.

Concerning the set of $T$-forced pairs of equation \eqref{eq:0-pertb-intro} we will prove the following result:
\begin{theorem}\label{th:main1}
 Let $\Omega$ be an open subset of $[0,\infty)\times C^1_T(U)$. Define the vector field $w\colon U\to\R^n$ as follows:
 \[
 w(p):=\frac{1}{T}\int_0^T f(t,p,0,0)\dif{t},
 \]
 and assume that $\deg(w,\Omega_U)$ is well-defined and nonzero. Then there exists a connected set $\Gamma$ of nontrivial $T$-forced pairs in $\Omega$ of \eqref{eq:0-pertb-intro} whose closure in $[0,\infty)\times C^1_T(U)$ intersects the set $\big\{ (0,\cl{p})\in [0,\infty) \times C^1_T(U): p\in w^{-1}(0)\cap\Omega_U\big\}$ and is not contained in any compact subset of $\Omega$. In particular, when $U=\R^n$ and $\Omega=[0,\infty)\times C^1_T(\R^n)$ then $\Gamma$ is unbounded. 
\end{theorem}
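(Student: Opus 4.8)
The plan is to reduce equation~\eqref{eq:0-pertb-intro} to a first-order system in normal form on the manifold $M := U \times \R^n \subseteq \R^{2n}$, so that the abstract results on periodically perturbed ODEs on manifolds (Theorems~\ref{th:0-single}, and the coupled version Theorem~\ref{th:coupled}, to be stated in Section~\ref{sec:preliminari}) become applicable. Concretely, using the auxiliary variable $y(t) = \phi(\lambda, x(t), x'(t))$ and the partial inverse $\psi$, a $T$-forced pair $(\lambda, x)$ of~\eqref{eq:0-pertb-intro} corresponds to a $T$-periodic solution $(x,y)$ of the system
\[
\begin{cases}
x'(t) = \psi\big(\lambda, x(t), y(t)\big),\\[2pt]
y'(t) = \lambda f\big(t, x(t), \psi(\lambda, x(t), y(t)), \lambda\big),
\end{cases}
\]
which at $\lambda = 0$ degenerates to $x' = \phi_0^{-1}(y)$, $y' = 0$; the set of stationary solutions of the $\lambda = 0$ system is $\{(p, \phi_0(0)) : p \in U\}$, a copy of $U$. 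This is the step where the hypotheses on $\phi$ and $\psi$ (continuity of $\psi$ and of $\lambda \mapsto \partial_1 \psi$) are exactly what is needed to place the system in the Carath\'eodory framework of~\cite{CaSp24, FP97-car, Spa00} and to guarantee the regularity asserted in Remark~\ref{rem:regular}.

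Next I would identify the relevant degree. The abstract global bifurcation theorem produces a branch of nontrivial $T$-periodic pairs emanating from a zero of the \emph{average vector field} of the unperturbed-plus-perturbation system restricted to the manifold of stationary points. After the change of variables, the only zeros of the tangent vector field at $\lambda = 0$ on $M$ sit at $y = \phi_0(0)$, and the effective bifurcation function on the $x$-component is the time-average of the $y$-equation's right-hand side evaluated along the stationary solution, i.e.
\[
p \longmapsto \frac{1}{T}\int_0^T f\big(t, p, \psi(0,p,\phi_0(0)), 0\big)\dif{t} = \frac{1}{T}\int_0^T f(t, p, 0, 0)\dif{t} = w(p),
\]
since $\psi(0,p,\phi_0(0)) = \phi_0^{-1}(\phi_0(0)) = 0$. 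Thus the degree condition $\deg(w, \Omega_U) \neq 0$ of the abstract theorem translates verbatim into the hypothesis of Theorem~\ref{th:main1}. Here one must check that $\Omega_U$ as defined corresponds, under the correspondence between $(\lambda, x)$ and $(\lambda, (x,y))$, to the relevant open slice of the manifold of stationary points determined by $\Omega$; this is a bookkeeping matter using that $\phi_0$ is a fixed homeomorphism, so $\cl{p} \in C^1_T(U)$ with $(0,\cl p) \in \Omega$ corresponds bijectively and bicontinuously to the constant pair $(p, \phi_0(0))$.

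Then I would invoke the abstract result to obtain a connected set $\Gamma'$ of nontrivial $T$-periodic pairs $(\lambda, (x,y))$ in the lifted open set, whose closure meets $\{(0, (p,\phi_0(0))) : p \in w^{-1}(0) \cap \Omega_U\}$ and is not contained in any compact subset of the lifted domain. Pushing $\Gamma'$ forward via $(\lambda,(x,y)) \mapsto (\lambda, x)$ — which is a homeomorphism onto its image by Remark~\ref{rem:regular}, $x$ being recovered as the $C^1$ function with $x' = \psi(\lambda, x, y)$ and $y$ recovered as $\phi(\lambda, x, x')$ — yields the desired $\Gamma$ in $\Omega$, with the stated intersection and noncompactness properties preserved. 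Finally, for $U = \R^n$ and $\Omega = [0,\infty)\times C^1_T(\R^n)$, a set not contained in any compact subset of a space of the form $[0,\infty)\times C^1_T(\R^n)$ which is connected and meets a bounded slice must be unbounded, because a bounded closed subset of $[0,\infty)\times C^1_T(\R^n)$ need not be compact — so this last assertion requires the extra argument that solutions $x$ in $\Gamma$ satisfy an a priori bound on $x'$ (hence lie in a relatively compact subset of $C^1_T$ on bounded $\lambda$-ranges) coming from the $T$-periodicity and the Carath\'eodory bound (F4): integrating $y' = \lambda f$ over a period forces $\int_0^T f = 0$-type control, and boundedness of $y$ together with continuity of $\psi$ bounds $x'$; thus an unbounded-in-$[0,\infty)\times C^1_T$ connected set must in fact be unbounded in the $\lambda$-direction together with $\|x\|_{C^1}$.

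The main obstacle I expect is precisely this last point — transferring ``not contained in any compact subset'' of the abstract (manifold) setting into genuine \emph{unboundedness} in the Banach space $[0,\infty)\times C^1_T(\R^n)$, which is not locally compact; this forces the a priori estimate and a compactness/Ascoli–Arzel\`a argument showing that $\lambda$-bounded, $C^1$-bounded pieces of $\Gamma$ are relatively compact. A secondary technical point is verifying carefully that the coupled abstract theorem from Section~\ref{sec:preliminari} applies despite the $y$-equation's right-hand side depending on $\psi(\lambda,x,y)$, i.e.\ that the resulting perturbation is still admissible Carath\'eodory with the required $\lambda$-differentiability at $0$ — but this is guaranteed by the standing assumptions on $\phi$ and $\psi$.
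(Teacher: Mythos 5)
Your proposal follows essentially the same route as the paper: introduce $y=\phi(\lambda,x,x')$, pass to the coupled system \eqref{eq:sysequiv} (the paper uses the Hadamard-type Lemma~\ref{le:hadamard1} to write the $x$-equation in the form $\phi_0^{-1}(y)+\lambda h$ demanded by Theorem~\ref{th:coupled}), reduce the degree of the product vector field to $\pm\deg(w,\Omega_U)$ using that $\phi_0^{-1}$ is a homeomorphism (the paper's Lemma~\ref{lem:redDeg1}), and push the resulting branch forward through the homeomorphism $(\lambda,x,y)\mapsto(\lambda,x)$, exactly as in the paper's proof. The one place you diverge is the final unboundedness claim, which you treat as requiring a separate a priori estimate and Ascoli-type argument: in fact Theorem~\ref{th:coupled} already asserts unboundedness of $\Upsilon$ when the domain is all of $[0,\infty)\times C_T(\R^{2k})$, so all that is needed is the easy remark that if $(\lambda,x)$ stays bounded in $[0,\infty)\times C^1_T(\R^n)$ then $y=\phi(\lambda,x,x')$ is bounded too, whence unboundedness transfers to $\Gamma$.
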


Note that this theorem is not applicable to \eqref{eq:g-pertb-intro}: compare with Remark \ref{rem:differenza}. For this equation, instead, the pertinent result is as follows:

\begin{theorem}\label{th:main2}
 Let $\Omega$ be an open subset of $[0,\infty)\times C^1_T(U)$. Let $ \gamma(p):=g(p,0)$ and assume that  $\deg\big(\gamma,\Omega_U\big)$ is well-defined and nonzero. Then there exists a connected set $\Gamma$ of nontrivial $T$-forced pairs in $\Omega$ of \eqref{eq:g-pertb-intro} whose closure in $[0,\infty)\times C^1_T(U)$ intersects the set $\big\{ (0,\cl{p})\in [0,\infty)\times C^1_T(U): p\in \gamma^{-1}(0)\cap\Omega_U\big\}$ and is not contained in any compact subset of $\Omega$. In particular, when $U=\R^n$ and $\Omega=[0,\infty)\times C^1_T(\R^n)$ then $\Gamma$ is unbounded. 
\end{theorem}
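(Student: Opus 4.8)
The plan is to reduce Theorem~\ref{th:main2} to a known global bifurcation statement for periodically perturbed autonomous ODEs on a manifold (which I expect will be Theorem~\ref{th:g-single} in Section~\ref{sec:preliminari}), via the change of variables that rewrites \eqref{eq:g-pertb-intro} as a first-order system in normal form. Concretely, I would introduce the auxiliary variable $y(t)=\phi\big(\lambda,x(t),x'(t)\big)$, so that by Remark~\ref{rem:regular} any solution satisfies $x'(t)=\psi\big(\lambda,x(t),y(t)\big)$, and \eqref{eq:g-pertb-intro} becomes the coupled system
\begin{equation}\label{eq:plan-system}
\begin{cases}
x'=\psi(\lambda,x,y),\\
y'=g\big(x,\psi(\lambda,x,y)\big)+\lambda f\big(t,x,\psi(\lambda,x,y),\lambda\big),
\end{cases}
\end{equation}
an ODE in normal form on the open set $U\times\R^n\subseteq\R^{2n}$, with the right-hand side Carath\'eodory in $t$ and continuous in $(\lambda,x,y)$. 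The third bullet assumption on $\phi$ gives $\psi(0,p,y)=\phi_0^{-1}(y)$, so at $\lambda=0$ the first equation reads $x'=\phi_0^{-1}(y)$; hence a $T$-forced pair of \eqref{eq:g-pertb-intro} with $\lambda=0$ and $x$ constant forces $\phi_0^{-1}(y)\equiv 0$, i.e. $y\equiv\phi_0(0)$ constant, and then the second equation at $\lambda=0$ gives $g(x,0)=0$, that is $\gamma(x)=0$. So the "trivial" forced pairs of \eqref{eq:g-pertb-intro} correspond exactly to the rest points $(p,\phi_0(0))$ of the unperturbed system \eqref{eq:plan-system} with $\gamma(p)=0$.

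Next I would compute the relevant degree on $\R^{2n}$ and match it with $\deg(\gamma,\Omega_U)$. The unperturbed vector field on $U\times\R^n$ is $(x,y)\mapsto\big(\phi_0^{-1}(y),\,g(x,\phi_0^{-1}(y))\big)$; its zero set is $\{(p,\phi_0(0)):\gamma(p)=0\}$, which sits over $\Omega_U$. Using the product/excision properties of the Brouwer degree (Section~\ref{sec:degree}) together with the fact that $\phi_0^{-1}$ is a homeomorphism sending $\phi_0(0)$ to $0$, a homotopy collapsing the $y$-block to $y-\phi_0(0)$ shows this planar degree equals $\pm\deg(\gamma,\Omega_U)$, hence is well-defined and nonzero under our hypothesis. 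This is the input needed to invoke the abstract branching theorem: there is a connected set $\widetilde\Gamma$ of nontrivial $T$-forced pairs $(\lambda,(x,y))$ of \eqref{eq:plan-system} in a suitably defined open set $\widetilde\Omega\subseteq[0,\infty)\times C^1_T(U\times\R^n)$, whose closure meets the rest points with $\gamma(p)=0$ and which is not contained in any compact subset of $\widetilde\Omega$. Finally I would push $\widetilde\Gamma$ forward along the map $(\lambda,(x,y))\mapsto(\lambda,x)$, which by Remark~\ref{rem:regular} is a homeomorphism onto its image between $T$-forced pairs of the two problems (the inverse being $(\lambda,x)\mapsto(\lambda,(x,\phi(\lambda,x,x')))$, continuous by continuity of $\phi$ and of $x\mapsto x'$ on $C^1_T$), to obtain the desired $\Gamma$ in $\Omega$, noting that connectedness, the boundary-intersection property, and "not contained in a compact subset" are all preserved by a homeomorphism.

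The two points requiring care, and the main obstacle, are: (i) choosing $\widetilde\Omega$ so that $\widetilde\Omega_{U\times\R^n}$ matches the slice over $\Omega_U$ at the correct value $y=\phi_0(0)$ and so that $\Omega$ and $\widetilde\Omega$ correspond under the homeomorphism above — this is bookkeeping but must be done precisely for the "noncompactness in $\Omega$" conclusion and the special case $U=\R^n$, $\Omega=[0,\infty)\times C^1_T(\R^n)$ to transfer correctly; and (ii) verifying that the abstract theorem of Section~\ref{sec:preliminari} genuinely applies to \eqref{eq:plan-system}, i.e. that the regularity hypotheses on $\psi$ (continuity of $\psi$ and of $\lambda\mapsto\partial_1\psi(\lambda,p,u)$, per the second bullet) are exactly what is needed for the $\lambda$-dependent vector field to fit the Carath\'eodory framework of \cite{CaSp24}. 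The genuinely delicate step is (ii), the degree identification $\deg\big((x,y)\mapsto(\phi_0^{-1}(y),g(x,\phi_0^{-1}(y))),\widetilde\Omega_{U\times\R^n}\big)=\pm\deg(\gamma,\Omega_U)$, since one must control how the homotopy interacts with the (possibly merely continuous, non-proper) behavior of $g$ in its second slot; I expect this is handled by an admissible homotopy argument restricting to a small neighborhood of the compact zero set and using that $g(\cdot,0)=\gamma$ is assumed admissible on $\Omega_U$.
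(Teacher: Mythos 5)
Your plan follows essentially the same route as the paper's proof: introduce $y(t)=\phi\big(\lambda,x(t),x'(t)\big)$, pass to the first-order system \eqref{eq:sysequiv2} on $U\times\R^n$, identify the unperturbed field $G(p,u)=\big(\phi_0^{-1}(u),\,g(p,\phi_0^{-1}(u))\big)$, reduce its degree to $\pm\deg(\gamma,\Omega_U)$ (the paper's Lemma~\ref{lem:redDeg2}), apply Theorem~\ref{th:g-single} on the open set $\Omega^*=\Omega\times C_T(\R^n)$ with slice $\Omega_U\times\R^n$, and transport the branch through the homeomorphism $(\lambda,x,y)\mapsto(\lambda,x)$ of Lemma~\ref{lem:homomTg}. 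The bookkeeping in your point (i) is exactly what the paper does.

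The one substantive step you leave open is the one the paper settles with its Hadamard-type Lemma~\ref{le:hadamard1}, and it is more than a verification of hypotheses: Theorem~\ref{th:g-single} applies to equations written exactly as $\dot\xi=\g(\xi)+\lambda\f(t,\xi,\lambda)$, whereas in your system the parameter also enters \emph{inside} $\psi(\lambda,x,y)$ and inside $g\big(x,\psi(\lambda,x,y)\big)$, not merely as a multiplicative factor. The paper uses the assumed continuity of $\lambda\mapsto\partial_1\psi(\lambda,p,u)$ to write $\psi(\lambda,p,u)=\phi_0^{-1}(u)+\lambda h(\lambda,p,u)$ with $h(\lambda,p,u)=\int_0^1\partial_1\psi(s\lambda,p,u)\,\mathrm{d}s$, and then splits off a further term $\lambda\wh h$ from $g\big(x,\phi_0^{-1}(y)+\lambda h\big)$, arriving at the rearranged system \eqref{eq:equiv2}, which is literally in the perturbed-autonomous form required (the splitting of the $g$-term also needs to be addressed, not only that of $\psi$). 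You flag this in your point (ii) but do not supply the factorization; without it the abstract theorem does not apply as stated. By contrast, the degree identification you single out as the genuinely delicate step is comparatively routine: after excision to a bounded neighborhood of the compact zero set $\{(p,\phi_0(0)):p\in\gamma^{-1}(0)\cap\Omega_U\}$, the paper's approximation and block-triangular determinant computation (Lemma~\ref{lem:redDeg1}, repeated verbatim for Lemma~\ref{lem:redDeg2}) yields $\deg\big(G,\Omega_U\times\R^n\big)=\pm\deg(\gamma,\Omega_U)$; your proposed homotopy collapsing the $y$-block would serve equally well there, and the ``non-proper'' behavior of $g$ in its second slot causes no trouble once one has excised to a bounded set containing the zeros.
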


We will prove these theorems by transforming equations \eqref{eq:0-pertb-intro} and \eqref{eq:g-pertb-intro} into a form suitable for investigation by the results of \cite{CaSp24}. We will also need a few technical facts about the topological degree of vector fields many of which carry over from~\cite{capesp-libro}. 

\begin{remark} \label{rem:differenza}
 Observe that the above Theorems \ref{th:main1} and \ref{th:main2}
have a similar statement and yield similar conclusions.
Nevertheless, it is not possible to consider one as a particular case of the other, even in the case when $g$ vanishes identically so that equation \eqref{eq:g-pertb-intro} reduces to \eqref{eq:0-pertb-intro}. 
To see this, notice that  the degree of the average wind $w$, that is crucial for Theorem \ref{th:main1}, plays no role in Theorem~\ref{th:main2}: in principle, it could not be even defined. Conversely, for equation \eqref{eq:0-pertb-intro}, the degree of $\gamma$ does not even make sense. 
\end{remark}

\section{Preliminary results}\label{sec:preliminari}
In order to pursue our investigation on the set of $T$-forced pairs of \eqref{eq:0-pertb-intro} and \eqref{eq:g-pertb-intro} we need some previous results about periodic perturbations of autonomous ODEs on manifolds, that have been first obtained in \cite{FuPe1997,FuSp1997,Spa2006} within the framework of continuous vector fields, and then extended in \cite{FP97-car,Spa00} in the Carath\'eodory setting.

In this section we present some recent facts on system of coupled equations on manifolds obtained in \cite{CaSp24}, that in a certain sense extend and unify the aforementioned ones. However, since in the context that we consider here it is not necessary to treat with equations on manifolds, we will present the indispensable results in a form simplified and significantly weaker but adequate to our purposes here.

Let $V,W\subseteq\R^k$ be open sets, and
consider the following system parametrized by $\lambda\geq 0$ on $V\times W$:
\begin{equation}\label{eq:mainCoupled}
\left\{
\begin{array}{l}
\dot\xi(t)=\lambda \f\big(t,\xi(t),\eta(t),\lambda\big),\\
\dot\eta(t)=\g\big(\xi(t),\eta(t)\big)+\lambda \h\big(t,\xi(t),\eta(t),\lambda\big).
\end{array}
\right.
\end{equation}
where $\f\colon\R\times V\times W\times [0,\infty)\to\R^k$, $\h\colon\R\times V\times W\times [0,\infty)\to\R^k$ are Carath\'eodory, $\g\colon V\times W\to\R^k$ is continuous, and $\f$ and $\h$ are $T$-periodic in the first variable.

\begin{definition} \label{def:T-triple}
 A triple $(\lambda,\xi,\eta)\in [0,\infty)\times C_T(V\times W)$, such that \eqref{eq:mainCoupled} holds for a.e.\ $t$, is called a \emph{$T$-triple} for \eqref{eq:mainCoupled}. A $T$-triple $(\lambda,\xi,\eta)$ is \emph{trivial} if $(\xi,\eta)$ is constant and $\lambda=0$.
\end{definition}

Given $(p,q)\in V\times W$, it is convenient to introduce some further notation. Recall that by $\cl{p}$ and $\cl{q}$ we mean the functions constantly equal to $p$ and $q$, respectively. According to this stipulation, a $T$-triple is trivial if and only if it is of the form $(0,\cl{p},\cl{q})$ with  $(p,q)\in\g^{-1}(0)$.

We are in a position to state the following key result concerning the set of $T$-triples of \eqref{eq:mainCoupled}, Theorem~\ref{th:coupled} below. It is indeed a particular case of Theorem 3.1 of \cite{CaSp24} where systems are considered on $\MxN$, the product of manifolds $M$ and $N$ of possibly (also trivial) different dimensions.
For this reason the proof is omitted.

\begin{theorem}[\cite{CaSp24}]\label{th:coupled}
Let $\f$, $\g$ and $\h$ be as in equation \eqref{eq:mainCoupled}, and let $\O$ be an open subset of $[0,\infty)\times C_T(V\times W)$. Put $\O_{V\times W}=\big\{(p,q)\in V\times W:(0,\cl{p},\cl{q})\in\O\big\}$ and let $\nu\colon V\times W\to\R^{k}\times\R^{k}$ be the vector field given by
\[
\nu(p,q)=\left(\frac{1}{T}\int_0^T\f(t,p,q,0)\dif{t}\;,\;\g(p,q)\right).
\]
Assume that $\deg\big(\nu,\O_{V\times W}\big)$ is well-defined and nonzero. Then there exists a connected set $\Upsilon$ of nontrivial $T$-triples in $\O$ of \eqref{eq:mainCoupled} whose closure in $[0,\infty)\times C_T(V\times W)$ intersects the set $\big\{(0,\cl{p},\cl{q})\in[0,\infty)\times C_T(V\times W):(p,q)\in\nu^{-1}(0)\cap\O_{V\times W}\big\}$ and is not contained in any compact subset of $\O$. In particular, if $V=W=\R^k$ and $\O=[0,\infty )\times C_T(\R^{2k})$, then $\Upsilon$ is unbounded.
\end{theorem}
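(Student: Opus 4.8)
The plan is to deduce Theorem \ref{th:coupled} from Theorem 3.1 of \cite{CaSp24}, which treats coupled systems of the form \eqref{eq:mainCoupled} on a product $\MxN$ of (boundaryless) differentiable manifolds, so the core of the argument is an identification of objects rather than a fresh construction. First I would observe that the open sets $V,W\subseteq\R^k$ are in particular smooth $k$-dimensional manifolds without boundary, hence \eqref{eq:mainCoupled} is a legitimate instance of the systems considered there with $M=V$ and $N=W$. Next I would check that the standing hypotheses of \cite{CaSp24} specialise exactly to ours: the Carath\'eodory and $T$-periodicity conditions on $\f$ and $\h$, and the continuity of $\g$, are precisely the ones assumed for \eqref{eq:mainCoupled}. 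The remaining point is to match the two notions of degree: the invariant driving Theorem 3.1 of \cite{CaSp24} is the degree (also called Euler characteristic) of the tangent vector field $\nu$, and on an open subset of a Euclidean space this coincides with the Brouwer degree $\deg(\nu,\O_{V\times W})$ recalled in Section \ref{sec:degree} (see the closing remark of that section). Once these identifications are made, the conclusion transcribes verbatim, including the last assertion that $\Upsilon$ is unbounded when $V=W=\R^k$ and $\O=[0,\infty)\times C_T(\R^{2k})$.

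To make the plan meaningful I would also recall, at least schematically, the mechanism behind Theorem 3.1 of \cite{CaSp24}, since that is where the substance lies. One regards the set of $T$-triples of \eqref{eq:mainCoupled} as the zero set of a suitable field $\Psi$ on the open subset $\O$ of $[0,\infty)\times C_T(V\times W)$, constructed by reformulating \eqref{eq:mainCoupled} as a fixed-point problem for a compact integral operator; the construction is arranged so that $\Psi$ is well defined down to $\lambda=0$, where the first equation degenerates into $\dot\xi=0$ (so $\xi$ is frozen at some $p$) and the second into the autonomous equation $\dot\eta=\g(p,\eta)$. The admissibility of $\deg(\Psi,\,\cdot\,)$ rests on the compactness, via Ascoli--Arzel\`a and Carath\'eodory a priori bounds, of the set of $T$-triples lying in a prescribed compact part of $\O$. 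The decisive \emph{bridge lemma} is a Furi--Pera type computation showing that the local degree of $\Psi$ on a small neighbourhood of $\{(0,\cl{p},\cl{q}):(p,q)\in\O_{V\times W}\}$ equals $\deg(\nu,\O_{V\times W})$. Granting this, a global bifurcation argument of Alexander--Yorke / Fitzpatrick--Massab\`o--Pejsachowicz type produces a connected set $\Upsilon$ of nontrivial $T$-triples whose closure meets the trivial branch and is not contained in any compact subset of $\O$, while the excision and additivity properties of the degree confine the bifurcation to $\nu^{-1}(0)$; the final claim follows because, when the ambient sets are full Euclidean spaces, bounded sets of $T$-triples are relatively compact, so such a $\Upsilon$ must be unbounded.

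The step I expect to be the main obstacle is not the topological continuation but the analytic groundwork that legitimises it in the \emph{Carath\'eodory} setting: one must show that the $T$-triples contained in a compact portion of $\O$ form a compact set depending upper-semicontinuously on the data, and one must handle correctly the degenerate limit $\lambda\to0^+$, where the usual Poincar\'e translation operator is not available and has to be replaced by a suitable ``ejecting''/approximation scheme. These are exactly the facts developed in \cite{FP97-car,Spa00} and refined in \cite{CaSp24}; in the present paper they may be invoked as a black box, which is precisely why the proof of Theorem \ref{th:coupled} is omitted and attributed to \cite{CaSp24}.
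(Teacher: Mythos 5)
Your proposal is correct and matches the paper's approach: the paper likewise treats Theorem \ref{th:coupled} as a particular case of Theorem 3.1 of \cite{CaSp24} (with $M=V$, $N=W$ open subsets of $\R^k$, and the degree of the tangent vector field reducing to the Brouwer degree as recalled at the end of Section \ref{sec:degree}), and omits the proof for exactly this reason. Your additional sketch of the mechanism behind the cited theorem is a reasonable gloss but is not required, since the paper invokes that result as a black box.
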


Following \cite[Remark 3.2]{CaSp24}, when system \eqref{eq:mainCoupled} is decoupled (for simplicity here we take $V=W$), we obtain results similar to  those of \cite{FuPe1997,Spa00}. Consider the following parameter-dependent differential equations on the open set $V\subseteq\R^k$, with $\lambda\in[0,\infty)$:
\begin{equation}\label{eq:g-single}
 \dot\xi(t)=\g\big(\xi(t)\big)+\lambda \f\big(t,\xi(t),\lambda\big),
\end{equation}
and
\begin{equation}\label{eq:0-single}
 \dot\xi(t)=\lambda \f\big(t,\xi(t),\lambda\big),
\end{equation}
where $\g\colon V\to\R^k$ is continuous and $\f\colon\R\times V\times [0,\infty)\to\R^k$ is Carath\'eodory and $T$-periodic in the first variable.

\begin{definition}
A pair $(\lambda,\xi)\in [0,\infty)\times C_T(V)$, such that \eqref{eq:g-single} (resp.\ \eqref{eq:0-single}) holds identically is a \emph{$T$-pair} for \eqref{eq:g-single} (resp.\ \eqref{eq:0-single}). A $T$-pair $(\lambda,\xi)$ is called \emph{trivial} if $\xi$ is constant and $\lambda=0$. 
\end{definition}

Given any point $p\in V$, as above, we denote by $\cl{p}$ the function constantly equal to $p$. Thus, a $T$-pair for \eqref{eq:g-single} is trivial if and only if it is of the form $(0,\cl{p})$ for some $p\in\g^{-1}(0)$. Clearly, all pairs $(0,\cl{p})$ are trivial $T$-pairs for \eqref{eq:0-single}, for all $p\in V$. 

As it is the case for equation \eqref{eq:mainCoupled}, qualitative properties of the set of $T$-pairs of \eqref{eq:g-single} and \eqref{eq:0-single} can be obtained from the degree of suitably chosen vector fields \cite[Thm.\ 3.1 and Remark 3.2]{CaSp24} compare also \cite{FuPe1997,Spa00}.

\begin{theorem}\label{th:g-single}
Let $\f$ and $\g$ be as in equation \eqref{eq:g-single} and let $\O $ be an open subset of $[0,\infty)\times C_T(V)$. Put $\O_V=\big\{p\in V:\cl{p}\in\O\big\}$. Assume that $\deg (\g,\O_V)$ is well defined and nonzero. Then there exists a connected set $\Upsilon$ of nontrivial $T$-pairs in $\O$ whose closure in $\left[ 0,\infty \right) \times C_T(V)$ intersects the set $\big\{(0,\cl{p})\in[0,\infty)\times C_T(V):p\in\g^{-1}(0)\cap\O_V\big\}$ and is not contained in any compact subset of $\O$. In particular, if $V=\R^k$ and $\O =\left[0,\infty \right) \times C_T(\R^k)$, then $\Upsilon$ is unbounded.
\end{theorem}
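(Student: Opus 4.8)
The plan is to derive Theorem \ref{th:g-single} from Theorem \ref{th:coupled} by viewing the single equation \eqref{eq:g-single} as a degenerate instance of the coupled system \eqref{eq:mainCoupled}. Concretely, I would first handle the trivial case $k$: take $W$ to be a single point (the $0$-dimensional manifold, as alluded to in the parenthetical ``(also trivial) different dimensions'' in the paragraph preceding Theorem \ref{th:coupled}). With $N$ a point, the $\eta$-component of \eqref{eq:mainCoupled} disappears entirely, and the system reduces to $\dot\xi(t)=\widetilde{\g}(\xi(t))+\lambda\widetilde{\h}(t,\xi(t),\lambda)$ on $V$. Setting $\widetilde{\g}:=\g$ and $\widetilde{\h}(t,p,\lambda):=\f(t,p,\lambda)$ (and $\f$ in \eqref{eq:mainCoupled} irrelevant, say $\equiv 0$), a $T$-triple of the reduced system is exactly a $T$-pair of \eqref{eq:g-single}, and trivial triples correspond to trivial pairs. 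The vector field $\nu$ from Theorem \ref{th:coupled} then collapses to $\nu(p)=\g(p)$ (the $\f$-average component being vacuous because $N$ is a point), so the hypothesis $\deg(\g,\O_V)\neq 0$ is precisely the hypothesis $\deg(\nu,\O_{V\times W})\neq 0$ of Theorem \ref{th:coupled}, and $\O_V=\O_{V\times W}$ under the natural identification $C_T(V\times N)\cong C_T(V)$.

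Alternatively — and this may be cleaner to write if one prefers to avoid invoking $0$-dimensional manifolds — I would embed into a genuine coupled system on $V\times\R^k$ by adding a dummy second variable. Set $\f(t,p,q,\lambda):\equiv 0$, $\g(p,q):=(\g(p),-q)$ viewed as an $\R^k\times\R^k$-valued field, wait—this needs care so I would instead keep the block structure: in \eqref{eq:mainCoupled} put the first equation as $\dot\xi=\lambda\f$ with $\f\equiv 0$, and the second as $\dot\eta=\g_{\text{new}}(\xi,\eta)+\lambda\h$, choosing $\g_{\text{new}}(p,q):=(\g(q)-\text{something})$... This route forces me to produce a field on $V\times W$ whose zero set and degree reproduce those of $\g$, which is doable (e.g. take the first block to carry $\g$ and contract the second block), but the bookkeeping with $\nu$ is more delicate than the point-manifold approach. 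So I would present the $0$-dimensional-manifold reduction as the main argument and relegate this as a remark, exactly as the excerpt already signals via the reference to \cite[Remark 3.2]{CaSp24}.

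The only genuinely non-routine point, and hence the main obstacle, is checking that Theorem \ref{th:coupled} really does specialize to a point factor $N$ without hidden nondegeneracy assumptions: one must confirm that when $N$ is $0$-dimensional the construction of the degree of the vector field $\nu$ in \cite{CaSp24} still makes sense and that the asserted connected set $\Upsilon$, together with its closure-intersection and non-compactness properties, transports verbatim. Since the excerpt explicitly states (in the sentence introducing Theorem \ref{th:coupled}) that the manifolds $M$ and $N$ may have ``possibly (also trivial) different dimensions'', this is granted by the cited result, so no new work is needed — one simply unwinds the definitions. The remaining steps (identifying $C_T(V\times N)$ with $C_T(V)$, matching trivial triples with trivial pairs, and matching the two openness/degree hypotheses) are purely formal. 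Finally, the ``in particular'' clause for $V=\R^k$, $\O=[0,\infty)\times C_T(\R^k)$ follows immediately from the corresponding clause of Theorem \ref{th:coupled} with $W=N$ a point, since a subset of $[0,\infty)\times C_T(\R^k)$ not contained in any compact subset of the whole space is, by properness of the relevant projections and local compactness, unbounded.
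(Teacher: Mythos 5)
Your overall strategy --- specializing the coupled result of \cite{CaSp24} by letting one of the two factors be a $0$-dimensional manifold --- is consistent with how the paper treats this statement (Theorem \ref{th:g-single} is quoted there without proof from \cite[Thm.~3.1 and Remark~3.2]{CaSp24}, compare \cite{Spa00}), but your reduction collapses the \emph{wrong} factor, and followed literally it proves Theorem \ref{th:0-single} rather than Theorem \ref{th:g-single}. In system \eqref{eq:mainCoupled} the unperturbed field $\g$ sits in the $\eta$-equation, which is posed on $W$ (the manifold $N$), while the $\xi$-equation on $V$ (the manifold $M$) is the perturbation of the zero field, and the first component of $\nu$ is the average of $\f$. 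If you take $W=N$ to be a point, it is the $\eta$-equation and the $\g$-component of $\nu$ that become vacuous, and what survives is $\dot\xi(t)=\lambda\f\big(t,\xi(t),\lambda\big)$ together with a degree condition on the average wind $\vv$ --- exactly the setting of Theorem \ref{th:0-single}; your claims that the reduced system is $\dot\xi=\g(\xi)+\lambda\h$ and that ``the $\f$-average component is vacuous because $N$ is a point'' are backwards. The correct specialization for Theorem \ref{th:g-single} takes the \emph{first} factor $M$ to be the point and $N=V$, with $\h:=\f$, so that $\nu$ collapses to $\g$ on $V$ and $\deg(\nu,\O_{M\times N})=\deg(\g,\O_V)$. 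Note also that this cannot be done inside Theorem \ref{th:coupled} as stated in the paper (it requires $V,W$ open in $\R^k$), so, as you correctly anticipate, one must invoke the manifold version of \cite{CaSp24}; your ``alternative'' embedding with a dummy $\R^k$-variable is left incomplete and would need a genuine argument to fix the degree bookkeeping.

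A second, more minor, flaw is your justification of the final ``in particular'' clause: $C_T(\R^k)$ is not locally compact, and a set contained in no compact subset of $[0,\infty)\times C_T(\R^k)$ need not be unbounded (any closed, bounded, noncompact subset is a counterexample). The unboundedness must be taken directly from the corresponding clause of the cited theorem --- whose proof rests on an Ascoli--Arzel\`a-type argument showing that a bounded branch of $T$-pairs would have compact closure --- and not deduced from noncompactness via ``properness of projections and local compactness''.
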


Similarly,

\begin{theorem}\label{th:0-single}
Let $\f$ be as in \eqref{eq:0-single}, define $\vv\colon V\to\R^k$ be the autonomous vector field given by
\[
 \vv(p):=\frac{1}{T}\int_0^T \f(t,p,0)\dif{t}.
\]
Let $\O$ be an open subset of $[0,\infty)\times C_T(V)$, and put $\O_V=\big\{p\in V:\cl{p}\in\O\big\}$. Assume that $\deg (\vv,\O_V)$ is well defined and nonzero. Then there exists a connected set $\Upsilon$ of nontrivial $T$-pairs in $\O$ whose closure in $\left[ 0,\infty \right) \times C_T(V)$ intersects the set $\big\{(0,\cl{p})\in[0,\infty)\times C_T(V): p\in \vv^{-1}(0)\cap\O_V\big\}$ and is not contained in any compact subset of $\O$. In particular, if $V=\R^k$ and $\O =\left[0,\infty \right) \times C_T(\R^k)$, then $\Upsilon$ is unbounded.
\end{theorem}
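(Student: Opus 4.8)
The plan is to deduce Theorem~\ref{th:0-single} from Theorem~\ref{th:coupled} by coupling \eqref{eq:0-single} with an auxiliary equation that admits only the trivial $T$-periodic solution. Concretely, set $W=\R^k$ and consider on $V\times\R^k$ the system \eqref{eq:mainCoupled} with
\[
\f(t,\xi,\eta,\lambda):=\f(t,\xi,\lambda),\qquad \g(\xi,\eta):=\eta,\qquad \h\equiv0,
\]
where on the left $\f$ is the datum of \eqref{eq:0-single}, now regarded as independent of $\eta$. The Carath\'eodory, continuity and $T$-periodicity requirements of Theorem~\ref{th:coupled} are all inherited. (Equivalently, Theorem~\ref{th:0-single} is the instance of \cite[Theorem~3.1]{CaSp24} in which the second factor manifold is a point; the reduction below keeps the proof within the framework recalled here.)

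First I would observe that the $\eta$-equation of this system is $\dot\eta(t)=\eta(t)$, whose only $T$-periodic solution is $\eta\equiv\cl0$ (from $\eta(t)=e^{t}\eta(0)$ and $T$-periodicity). Hence every $T$-triple of the coupled system has the form $(\lambda,\xi,\cl0)$, and it is a $T$-triple, respectively a \emph{nontrivial} one, precisely when $(\lambda,\xi)$ is a $T$-pair, respectively a nontrivial one, of \eqref{eq:0-single}. Therefore the continuous projection $\pi(\lambda,\xi,\eta)=(\lambda,\xi)$ restricts to a homeomorphism of the closed slice $[0,\infty)\times C_T(V)\times\{\cl0\}$ of $[0,\infty)\times C_T(V\times\R^k)$ onto $[0,\infty)\times C_T(V)$ that maps the $T$-triples bijectively onto the $T$-pairs.

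Next, given the open set $\O\subseteq[0,\infty)\times C_T(V)$ of the statement, I would take $\wt\O:=\pi^{-1}(\O)$, which is open in $[0,\infty)\times C_T(V\times\R^k)$ and equals, under the identification $C_T(V\times\R^k)=C_T(V)\times C_T(\R^k)$, the set $\O\times C_T(\R^k)$. Then $\wt\O_{V\times W}=\O_V\times\R^k$ and the vector field of Theorem~\ref{th:coupled} is $\nu(p,q)=\big(\vv(p),q\big)=\big(\vv\times\mathrm{id}_{\R^k}\big)(p,q)$. Since $\nu^{-1}(0)\cap(\O_V\times\R^k)=\big(\vv^{-1}(0)\cap\O_V\big)\times\{0\}$ is compact (because $\deg(\vv,\O_V)$ is well defined), $\deg(\nu,\wt\O_{V\times W})$ is well defined, and by multiplicativity of the Brouwer degree (which in the present possibly-unbounded setting follows by excision to bounded domains together with the classical product formula) it equals $\deg(\vv,\O_V)\cdot\deg(\mathrm{id}_{\R^k},\R^k)=\deg(\vv,\O_V)\neq0$.

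Finally I would invoke Theorem~\ref{th:coupled}: it yields a connected set $\Upsilon'$ of nontrivial $T$-triples in $\wt\O$ whose closure meets $\{(0,\cl p,\cl0):p\in\vv^{-1}(0)\cap\O_V\}$ and is not contained in any compact subset of $\wt\O$. Setting $\Upsilon:=\pi(\Upsilon')$, which together with its closure lies in the slice where $\pi$ is a homeomorphism, we obtain that $\Upsilon$ is a connected set of nontrivial $T$-pairs whose closure intersects $\{(0,\cl p):p\in\vv^{-1}(0)\cap\O_V\}$; moreover $\Upsilon$ is not contained in any compact subset of $\O$, since $\Upsilon\subseteq K$ with $K\subseteq\O$ compact would force $\Upsilon'\subseteq K\times\{\cl0\}$, a compact subset of $\wt\O$. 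When $V=\R^k$ and $\O=[0,\infty)\times C_T(\R^k)$ one has $W=\R^k$ and $\wt\O=[0,\infty)\times C_T(\R^{2k})$, so the last clause of Theorem~\ref{th:coupled} makes $\Upsilon'$, hence $\Upsilon$, unbounded. The only real (and modest) obstacle is the bookkeeping in this last paragraph — confirming that the auxiliary equation genuinely pins $\eta$ to $\cl0$, so that $\pi$ is a bijection between the two solution sets, and that ``not contained in a compact subset'' and unboundedness transfer along $\pi$ — which rests on the facts that $\pi$ carries compacta to compacta while $K\mapsto K\times\{\cl0\}$ carries compacta of $\O$ to compacta of $\wt\O$.
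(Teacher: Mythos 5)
Your derivation is correct; note, though, that the paper itself does not prove Theorem \ref{th:0-single} at all: it is quoted from \cite{CaSp24} (Theorem 3.1 together with Remark 3.2, the decoupled case), compare also \cite{FP97-car,Spa00}. What you do instead is make the reduction explicit inside the framework recalled in Section \ref{sec:preliminari}: you adjoin the dummy equation $\dot\eta(t)=\eta(t)$ (i.e.\ $\g(\xi,\eta)=\eta$, $\h\equiv 0$, $\f$ independent of $\eta$), observe that its only $T$-periodic solution is $\eta\equiv\cl{0}$, so every $T$-triple lies in the closed slice $[0,\infty)\times C_T(V)\times\{\cl{0}\}$ on which the projection $\pi$ is a homeomorphism respecting triviality, and then compute $\deg\big(\nu,\O_V\times\R^k\big)=\deg(\vv,\O_V)\cdot\deg(\mathrm{id}_{\R^k},\R^k)=\deg(\vv,\O_V)\neq 0$, after which Theorem \ref{th:coupled} and the transfer of ``not contained in a compact subset''/unboundedness along $\pi$ give the statement; all of these steps are sound, and the degree computation is exactly parallel in spirit to Lemma \ref{lem:redDeg1}. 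What your route buys is self-containedness, and it substantiates the paper's remark that Theorem \ref{th:coupled} subsumes both Theorems \ref{th:g-single} and \ref{th:0-single}; what the paper's citation route buys is brevity, deferring to \cite{CaSp24} where the decoupled statement is established. Two small points you should make explicit: (i) the claim $\eta(t)=e^t\eta(0)$ uses that solutions in the Carath\'eodory setting are (locally) absolutely continuous, i.e.\ of class $W^{1,1}_{\mathrm{loc}}$ as in Definition \ref{def:sol}, so that the a.e.\ identity $\dot\eta=\eta$ really forces $\eta\equiv\cl{0}$; and (ii) the multiplicativity of the degree in the extended (possibly unbounded) setting, which you correctly justify by excision down to bounded domains followed by the classical product formula -- alternatively one could repeat verbatim the approximation argument of Lemma \ref{lem:redDeg1}.
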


Observe that Theorem \ref{th:0-single} concerns the case of perturbations of the zero vector field, which is not covered in general by Theorem~\ref{th:g-single}. In fact, under the above assumptions,
Theorem \ref{th:g-single} above is not applicable, $\g$ not being admissible.

 \section{Rearrangement of equations \eqref{eq:0-pertb-intro} and \eqref{eq:g-pertb-intro}}\label{se:rearrange}
 
 We now rewrite equations \eqref{eq:0-pertb-intro} and \eqref{eq:g-pertb-intro} into a form suitable for the application of the results of the previous section.
More precisely,
it is convenient to rewrite equations \eqref{eq:0-pertb-intro} and \eqref{eq:g-pertb-intro} as systems of ODEs in normal form. To do so, suppose that $x$ is a solution of \eqref{eq:0-pertb-intro} and introduce the function $y(t):=\phi\big(\lambda,x(t),x'(t)\big)$
which belongs to $W_\mathrm{loc}^{1,1}(I)$. Thus $x'(t) = \psi\big(\lambda,x(t),y(t)\big)$
and
\[
y'(t)=\lambda f\Big(t,x(t),\psi\big(\lambda,x(t),y(t)\big),\lambda\Big)
\]
so that the function $t\mapsto \big(x(t),y(t)\big)$, taking values in $U\times\R^n$, is in  $W_\mathrm{loc}^{1,1}(I)$ and satisfies the system:
\begin{equation}\label{eq:sysequiv}
\left\{
\begin{array}{l}
x'(t)=\psi\big(\lambda,x(t),y(t)\big)\\
y'(t)=\lambda f\Big(t,x(t),\psi\big(\lambda,x(t),y(t)\big),\lambda\Big),
\end{array}
\right.
\end{equation}
for a.e.\ $t \in \R$. In other words $(x,y)$ is a solution of \eqref{eq:sysequiv}.

In a similar way, if $x$ is a solution of \eqref{eq:g-pertb-intro} we find that $t\mapsto \big(x(t),y(t)\big)$ satisfies
\begin{equation}\label{eq:sysequiv2}
\left\{
\begin{array}{l}
x'(t)=\psi\big(\lambda,x(t),y(t)\big)\\
y'(t)=g\Big(x(t),\psi\big(\lambda,x(t),y(t)\big)\Big)+\lambda f\Big(t,x(t),\psi\big(\lambda,x(t),y(t)\big),\lambda\Big),
\end{array}
\right.
\end{equation}
for a.e.\ $t \in \R$, that is $(x,y)$ is a solution of \eqref{eq:sysequiv2}.

Conversely, consider a solution $t\mapsto \big(x(t),y(t)\big)\in U\times\R^n$ of \eqref{eq:sysequiv} (resp.\ \eqref{eq:sysequiv2}) and note that, by definition, $(x,y)\colon J\to\R^n\times\R^n$ is in $W_\mathrm{loc}^{1,1}(J)$, $J\subset\R$ being an interval.  By the first equation of \eqref{eq:sysequiv} (resp.\ \eqref{eq:sysequiv2}) we get $y(t):=\phi\big(\lambda,x(t),x'(t)\big)$, thus we have that
$x$ is a solution of \eqref{eq:g-pertb-intro} (resp.\ \eqref{eq:0-pertb-intro}).

Our purpose now is to rewrite equations \eqref{eq:sysequiv} and \eqref{eq:sysequiv2} into a more useful form by ``isolating'' $\lambda$ from inside $\psi$ as well as $g$ in these systems. A crucial step is the following simple  version of Hadamard's Lemma, compare e.g.\ \cite{Arn2006}:
 \begin{lemma}[\cite{capesp-libro}]\label{le:hadamard1}
 Let $\phi$ be as in equations \eqref{eq:0-pertb-intro} and \eqref{eq:g-pertb-intro}.  Let also $p\in U$, $q,u\in\R^n$ and $\lambda\in[0,\infty)$ be such that $u=\phi(\lambda,p,q)$. Then there exists a continuous function $h$ such that  $q=\phi_0^{-1}(u)+\lambda h(\lambda,p,u)$. 
 \end{lemma}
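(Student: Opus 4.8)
The plan is to extract from the relation $u = \phi(\lambda,p,q)$ the fact that $q = \psi(\lambda,p,u)$, and then to isolate the $\lambda$-dependence using the fundamental theorem of calculus along the segment from $0$ to $\lambda$ in the first variable. First I would observe that, by the definition of the partial inverse $\psi$, the hypothesis $u = \phi(\lambda,p,q)$ is equivalent to $q = \psi(\lambda,p,u)$, and that by the last bullet assumption on $\phi$ we have $\psi(0,p,u) = \phi_0^{-1}(u)$. Thus it suffices to show that $\psi(\lambda,p,u) - \psi(0,p,u) = \lambda\, h(\lambda,p,u)$ for a suitable continuous $h$, i.e. to produce the Hadamard-type remainder.

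The key step is to write, for fixed $(p,u)$,
\[
\psi(\lambda,p,u) - \psi(0,p,u) = \int_0^1 \ldif{s}\,\psi(s\lambda,p,u)\dif{s} = \lambda\int_0^1 \partial_1\psi(s\lambda,p,u)\dif{s},
\]
which is legitimate because, by the second bullet assumption, $\lambda\mapsto\partial_1\psi(\lambda,p,u)$ is continuous (hence $s\mapsto\psi(s\lambda,p,u)$ is $C^1$ on $[0,1]$ and the chain rule applies). Then I would set
\[
h(\lambda,p,u) := \int_0^1 \partial_1\psi(s\lambda,p,u)\dif{s},
\]
so that $q = \psi(\lambda,p,u) = \phi_0^{-1}(u) + \lambda\, h(\lambda,p,u)$, which is exactly the claimed decomposition.

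It remains to check that $h$ is continuous on $[0,\infty)\times U\times\R^n$. This is where the only real care is needed: I would argue by dominated convergence / uniform continuity on compacta. Given a convergent sequence $(\lambda_j,p_j,u_j)\to(\lambda_*,p_*,u_*)$, the points $(s\lambda_j,p_j,u_j)$ for $s\in[0,1]$ and $j$ large all lie in a fixed compact subset $K$ of $[0,\infty)\times U\times\R^n$; on $K$ the integrand $\partial_1\psi$ is continuous, hence bounded and uniformly continuous, so $\partial_1\psi(s\lambda_j,p_j,u_j)\to\partial_1\psi(s\lambda_*,p_*,u_*)$ uniformly in $s\in[0,1]$, and passing to the limit under the integral sign gives $h(\lambda_j,p_j,u_j)\to h(\lambda_*,p_*,u_*)$. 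The main (mild) obstacle is precisely this continuity verification, since the hypotheses only grant continuity of $\psi$ jointly and of $\partial_1\psi$ in $\lambda$ for fixed $(p,u)$; one must be slightly careful that $\partial_1\psi$ is in fact jointly continuous — which follows because $\partial_1\psi(\lambda,p,u) = \lim_{t\to 0}\frac{1}{t}\big(\psi(\lambda+t,p,u)-\psi(\lambda,p,u)\big)$ need not itself be assumed jointly continuous, so instead I would, if necessary, phrase the continuity of $h$ directly via the integral representation together with the joint continuity of $\psi$, writing $h(\lambda,p,u) = \lambda^{-1}\big(\psi(\lambda,p,u)-\phi_0^{-1}(u)\big)$ for $\lambda>0$ and checking the limit as $\lambda\to0$ separately. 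Everything else is routine.
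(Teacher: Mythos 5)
Your argument is exactly the paper's: write $q=\psi(\lambda,p,u)$, use $\psi(0,p,u)=\phi_0^{-1}(u)$, and apply the fundamental theorem of calculus in the first variable to get $h(\lambda,p,u)=\int_0^1\partial_1\psi(s\lambda,p,u)\dif{s}$, which is precisely the paper's proof of Lemma~\ref{le:hadamard1}. Your additional scrutiny of the joint continuity of $h$ (noting that $\partial_1\psi$ is only assumed continuous in $\lambda$ for fixed $(p,u)$) goes beyond what the paper records, which simply defines $h$ by this integral and asserts its continuity.
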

 \begin{proof}
 Put $q=\psi(\lambda,p,u)$. It is enough to observe that
 \begin{align*}
  q-\phi_0^{-1}(u)=
  q-\psi(0,p,u)=&\int_0^1 \ldif{s}\psi(s\lambda,p,u)\dif{s}\\
  =&\int_0^1 \lambda \partial_1\psi(s\lambda,p,u)\dif{s}=
  \lambda h(\lambda,p,u),
 \end{align*}
where $h(\lambda,p,u)=\int_0^1\partial_1\psi(s\lambda,p,u)\dif{s}$.
\end{proof}

 By Lemma \ref{le:hadamard1} we see that 
 \begin{equation}\label{eq:psi-ident}
 \psi\big(\lambda,x(t),y(t)\big)=\phi_0^{-1}\big(y(t)\big)+\lambda h\big(\lambda,x(t),y(t)\big)
 \end{equation}
 for some mapping $h$. Hence system \eqref{eq:sysequiv} can be rewritten as
 \begin{equation}\label{eq:equiv1}
  \left\{
\begin{array}{l}
x'(t)=\phi_0^{-1}\big(y(t)\big)+\lambda h\big(\lambda,x(t),y(t)\big)\\
y'(t)=\lambda f\Big(t,x(t),\psi\big(\lambda,x(t),y(t)\big),\lambda\Big),
\end{array}
\right.
 \end{equation}

 Let us now deal with system \eqref{eq:sysequiv2}. Using \eqref{eq:psi-ident} we obtain
 \[
  g\Big(x(t),\psi\big(\lambda,x(t),y(t)\big)\Big)=g\Big(x(t),\phi_0^{-1}\big(y(t)\big)+\lambda h(\lambda,x(t),y(t)\big)\Big),
 \]
and using again Lemma \ref{le:hadamard1}, we get, for an appropriate function $\wh h$, the equality:
\begin{equation*}
 g\Big(x(t),\phi_0^{-1}\big(y(t)\big)+\lambda h(\lambda,x(t),y(t)\big)\Big)
 =g\Big(x(t),\phi_0^{-1}\big(y(t)\big)\Big)+\lambda\wh h(\lambda,x(t),y(t)\big).
\end{equation*}
Thus we can rewrite \eqref{eq:sysequiv2} as
\begin{equation}\label{eq:equiv2}
  \left\{\begin{array}{l}
         x'(t)=\phi_0^{-1}\big(y(t)\big)+\lambda h(\lambda,x(t),y(t)\big),\\
         y'(t)=g\Big(x(t),\phi_0^{-1}\big(y(t)\big)\Big)+\lambda F\big(t,x(t),y(t),\lambda\big).
         \end{array}
         \right.
 \end{equation}
where $F$ collects all the $\lambda$-dependent terms in the second equation of \eqref{eq:sysequiv2}, that is:
\begin{equation}\label{eq:bigF}
 F\big(t,x(t),y(t),\lambda\big)=\wh h(\lambda,x(t),y(t)\big)+f\Big(t,x(t),\psi\big(\lambda,x(t),y(t)\big),\lambda\Big).
\end{equation}

 \section{Proof of the main results}
 \label{se:proofs}

 \subsection{Proof of Theorem \ref{th:main1}}
 \label{se:proofThm1}

This section is devoted to the proof of Theorem \ref{th:main1} regarding the structure of the set of solutions of of \eqref{eq:0-pertb-intro}. As a first step, we relate the set $X \subseteq [0,\infty)\times C^1_T(U)$ of $T$-forced pairs of \eqref{eq:0-pertb-intro} with the set $Y\subseteq [0,\infty)\times C_T(U\times\R^n)$ of $T$-triples of~\eqref{eq:equiv1}, see  Definition \ref{def:T-triple}. To do so, we define the map $\mathfrak{H}\colon Y\to X$ by $\mathfrak{H}\colon(\lambda,x,y)\mapsto (\lambda,x)$, and observe that $\mathfrak{H}$ is continuous.

Lemma \ref{lem:homomT} below is analogous to Lemma 7.1 in \cite{capesp-libro}. We give the proof for completeness.

 \begin{lemma} \label{lem:homomT}
 The map $\mathfrak{H}$ is a homeomorphism that respects the notion of triviality, in the sense that it makes 
  trivial $T$-triples correspond to trivial $T$-forced pairs and vice versa.
 \end{lemma}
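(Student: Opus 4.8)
The plan is to produce an explicit inverse for $\mathfrak{H}$ and check that both it and $\mathfrak{H}$ are continuous, paying attention to the fact that the $x$-slot of a pair in $X$ carries the $C^1_T$-topology while in $Y$ only the $C_T$-topology is in play. Everything rests on the equivalences already set up in Section~\ref{se:rearrange}: \eqref{eq:equiv1} is \eqref{eq:sysequiv} rewritten via \eqref{eq:psi-ident}, and \eqref{eq:sysequiv} is equivalent to \eqref{eq:0-pertb-intro}.

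First I would verify that $\mathfrak{H}$ really maps $Y$ into $X$ and is onto, thereby identifying the candidate inverse. If $(\lambda,x,y)\in Y$, the first equation of \eqref{eq:equiv1} shows that $x'$ coincides with the continuous function $t\mapsto\phi_0^{-1}\big(y(t)\big)+\lambda h\big(\lambda,x(t),y(t)\big)=\psi\big(\lambda,x(t),y(t)\big)$, so $x\in C^1_T(U)$ (cf.\ Remark~\ref{rem:regular}); applying $\phi(\lambda,x(t),\cdot)$ to both sides gives $y(t)=\phi\big(\lambda,x(t),x'(t)\big)$, and inserting this into the second equation shows that $x$ solves \eqref{eq:0-pertb-intro}, i.e.\ $(\lambda,x)\in X$. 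Conversely, given $(\lambda,x)\in X$, the function $y(t):=\phi\big(\lambda,x(t),x'(t)\big)$ lies in $W^{1,1}_{\mathrm{loc}}\cap C_T(\R^n)$ and, by the computation in Section~\ref{se:rearrange}, $(x,y)$ solves \eqref{eq:sysequiv} and hence \eqref{eq:equiv1}; thus $(\lambda,x,y)\in Y$ and $\mathfrak{H}(\lambda,x,y)=(\lambda,x)$. This gives surjectivity and pins down
\[
\mathfrak{H}^{-1}\colon(\lambda,x)\longmapsto\big(\lambda,x,\phi(\lambda,x(\cdot),x'(\cdot))\big)
\]
as the only possible inverse. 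Injectivity is immediate: if $\mathfrak{H}(\lambda,x,y)=\mathfrak{H}(\lambda,x,\tilde y)$, then $\psi(\lambda,x(t),y(t))=x'(t)=\psi(\lambda,x(t),\tilde y(t))$ for every $t$, and since $\psi(\lambda,p,\cdot)$ is the inverse of the bijection $\phi(\lambda,p,\cdot)$ we conclude $y=\tilde y$.

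Then I would check the two continuity statements. Continuity of $\mathfrak{H}$ (already noted in the text) follows because on $Y$ one has $x'=\psi(\lambda,x(\cdot),y(\cdot))$, so $C_T$-convergence of $(\lambda_k,x_k,y_k)$ forces $C^1_T$-convergence of the $x_k$; for $\mathfrak{H}^{-1}$, note that $(\lambda,x)\mapsto(\lambda,x,x')$ is continuous from $[0,\infty)\times C^1_T(U)$ into $[0,\infty)\times C_T(U)\times C_T(\R^n)$, and that the superposition operator $(\mu,u,v)\mapsto\big[t\mapsto\phi(\mu,u(t),v(t))\big]$ is continuous into $C_T(\R^n)$ — here one uses that the values taken along a convergent sequence stay in a fixed compact subset of $[0,\infty)\times U\times\R^n$, on which the continuous map $\phi$ is uniformly continuous. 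Composing, and using the continuous embedding $C^1_T(U)\hookrightarrow C_T(U)$, we get that $\mathfrak{H}^{-1}$ is continuous, so $\mathfrak{H}$ is a homeomorphism.

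Finally, for the triviality correspondence: a trivial $T$-triple of \eqref{eq:equiv1} is a constant solution $(0,\cl{x_0},\cl{y_0})$, which $\mathfrak{H}$ sends to $(0,\cl{x_0})$, a trivial $T$-forced pair of \eqref{eq:0-pertb-intro}; conversely $\mathfrak{H}^{-1}(0,\cl{x_0})=(0,\cl{x_0},\cl{y_0})$ with $y_0=\phi(0,x_0,0)=\phi_0(0)$, a constant, hence a trivial $T$-triple. I expect the only mildly delicate point in the whole argument to be the continuity of the superposition operator associated with $\phi$, together with the need to keep the $C^1_T$ and $C_T$ topologies distinct; beyond that, the proof is a routine translation between \eqref{eq:0-pertb-intro}, \eqref{eq:sysequiv} and \eqref{eq:equiv1} carried out in Section~\ref{se:rearrange}.
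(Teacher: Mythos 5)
Your proof is correct and follows essentially the same route as the paper: exhibit the explicit inverse $\mathfrak{H}^{-1}(\lambda,x)=\big(\lambda,x,\phi(\lambda,x,x')\big)$ via the equivalence of \eqref{eq:0-pertb-intro} with \eqref{eq:sysequiv}--\eqref{eq:equiv1}, use continuity of $\phi$ (and $\psi$) to get continuity in both directions, and check the triviality correspondence by noting that constant $x$ forces constant $y=\phi_0(0)$. The only difference is that you spell out details the paper leaves as remarks -- in particular the $C_T$-versus-$C^1_T$ issue that makes the continuity of $\mathfrak{H}$ itself nontrivial, handled correctly via $x'=\psi(\lambda,x,y)$ and uniform continuity on compact sets.
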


\begin{proof}
Let $(\lambda,x,y)$ be a $T$-triple of \eqref{eq:equiv1}; then, by definition, $x$ is a $T$-periodic solution of \eqref{eq:0-pertb-intro} corresponding to $\lambda$. Now, if $(\lambda,x,y)$ is trivial, this means that $\lambda=0$ and $x$ is constant: therefore $(\lambda,x)$ is a trivial $T$-forced pair. 
  
Conversely, let $(\lambda,x)$ be a $T$-forced pair of \eqref{eq:0-pertb-intro}; this means, as already observed in Remark \ref{rem:regular}, that $x$ is $C^1$ and, by the continuity of $\phi$, the map $t\mapsto y(t)=\phi\big(\lambda,x(t),x'(t)\big)$ is continuous. Note that $y$ is $T$-periodic and that $(x,y)$ is a $T$-periodic solution of \eqref{eq:equiv1} corresponding to $\lambda$. Namely, 
\[
  \mathfrak{H}^{-1}(\lambda,x) = \left(\lambda,x, \phi\big(\lambda,x,x'\big)\right) = (\lambda,x,y).
\]
It is not difficult, using the continuity of $\phi$, to show that $\mathfrak{H}^{-1}\colon X\to Y$ is a continuous map.

Finally, if $x$ is constant so is $y$, hence the image under $\mathfrak{H}^{-1}$ of any trivial $T$-forced pair is a trivial $T$-triple.
 \end{proof}

 Consider the following vector field, defined on $U\times \R^{n}$:
 \[
 \nu(p,u):=\left(\frac{1}{T}\int_0^T f\big(t,p,\phi_0^{-1}(u),0\big)\dif{t}\;,\;\phi_0^{-1}(u)\right).
 \]
In view of Theorem \ref{th:coupled} and Lemma \ref{lem:homomT} one sees that the degree of $\nu$ plays a crucial role in our argument. However, as shown by the following lemma, one does not need to compute the degree of $\nu$. In fact, since $\phi_0^{-1}$ is a homeomorphism, the computation of the degree of $\nu$ can be reduced to that of the ``average wind'' i.e.\ 
\begin{equation}\label{def:w}
 w(p):=\frac{1}{T}\int_0^T f(t,p,0,0)\dif{t}.
\end{equation}

Again, Lemma \ref{lem:redDeg1} below is similar to Lemma 7.2 in \cite{capesp-libro}.
We sketch the proof for the reader's convenience.

\begin{lemma}[\cite{capesp-libro}]\label{lem:redDeg1}
 Let $W\subseteq U$ be open. The vector field $\nu$ is admissible for the degree in $W\times\R^n$ if and only if so is $w$ in $W$, and 
\begin{equation}\label{eq:fdeg}
  \deg\big(\nu,W\times\R^n\big)=\pm \deg\big(w,W\big).
\end{equation}
\end{lemma}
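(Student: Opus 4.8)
The plan is to exhibit an admissible homotopy on $W\times\R^n$ deforming $\nu$ into a product map whose degree splits, via the multiplicativity of the Brouwer degree, into the degree of $w$ times the degree of $\phi_0^{-1}$. First I would recall that $\nu(p,u)=\big(w_1(p,u),\phi_0^{-1}(u)\big)$ where $w_1(p,u):=\frac1T\int_0^T f\big(t,p,\phi_0^{-1}(u),0\big)\dif t$. Since $\phi_0^{-1}$ is a homeomorphism of $\R^n$, the only zero of $u\mapsto\phi_0^{-1}(u)$ is $u_0:=\phi_0(0)$; hence $\nu^{-1}(0)\cap(W\times\R^n)=\big(w^{-1}(0)\cap W\big)\times\{u_0\}$, because at $u=u_0$ one has $\phi_0^{-1}(u_0)=0$ and so $w_1(p,u_0)=w(p)$. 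This already shows that $\nu$ is admissible in $W\times\R^n$ (i.e.\ $\nu^{-1}(0)\cap(W\times\R^n)$ is compact) if and only if $w^{-1}(0)\cap W$ is compact, i.e.\ $w$ is admissible in $W$, giving the first assertion.

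For the degree identity I would proceed in two homotopy steps. First, deform the first component: consider $H\big(s,p,u\big)=\big(\tfrac1T\int_0^T f(t,p,s\,\phi_0^{-1}(u),0)\dif t,\ \phi_0^{-1}(u)\big)$ for $s\in[0,1]$. Throughout the homotopy the second component forces $u=u_0$, where $\phi_0^{-1}(u_0)=0$, so the first component equals $w(p)$ independently of $s$; hence the zero set stays equal to $\big(w^{-1}(0)\cap W\big)\times\{u_0\}$, which is compact, and the homotopy is admissible. At $s=0$ we reach the product map $\nu_0(p,u)=\big(w(p),\phi_0^{-1}(u)\big)$. Second, apply the multiplicativity (Cartesian product) property of the Brouwer degree together with the translation/excision properties: $\deg\big(\nu_0,W\times\R^n\big)=\deg(w,W)\cdot\deg\big(\phi_0^{-1},\R^n\big)$, and since $\phi_0^{-1}$ is a homeomorphism of $\R^n$ onto itself its degree is $\pm1$ (it equals $+1$ or $-1$ according to whether $\phi_0$ preserves or reverses orientation; in any case $|\deg(\phi_0^{-1},\R^n)|=1$). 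This yields \eqref{eq:fdeg}.

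A small technical point to address is that $\R^n$ is unbounded, so one must use the extended notion of Brouwer degree recalled in Section \ref{sec:degree}: admissibility only requires the zero set to be compact, and both the product formula and homotopy invariance hold in that generality (see \cite{Hir76,L78,Ni74}); alternatively, by excision one may replace $\R^n$ by a large open ball $B$ containing $u_0$ with $\phi_0^{-1}(\partial B)\not\ni 0$, reducing everything to the classical bounded setting. The main obstacle is not any single step but making sure the homotopy $H$ is genuinely admissible on the \emph{noncompact} set $W\times\R^n$: one has to check that no zeros escape to the boundary or to infinity during the deformation, which here is immediate precisely because the (undeformed) second component $\phi_0^{-1}$ pins $u$ to the single value $u_0$, so the zero set is literally constant along $s$. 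Once this is observed the remainder is a routine application of the standard degree properties.
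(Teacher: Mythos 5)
Your proof is correct, but the degree identity is established by a genuinely different route than the paper's. You prove the zero-set identification $\nu^{-1}(0)\cap(W\times\R^n)=\big(w^{-1}(0)\cap W\big)\times\{u_0\}$ exactly as the paper does, and this settles the equivalence of admissibility in the same way. For \eqref{eq:fdeg}, however, the paper first excises to a bounded product $W_1\times W_2$, then approximates the first component by a smooth map with isolated zeros and $\phi_0$ by a smooth diffeomorphism, passes to these approximations by a small admissible homotopy, and finally applies the computation formula, exploiting the block-triangular Jacobian $\det\hat\nu'(p_0,u_0)=\det\hat w'(p_0)\det(\hat\phi_0^{-1})'(u_0)$. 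You instead deform $\nu$ through $H(s,p,u)=\big(\tfrac1T\int_0^T f(t,p,s\,\phi_0^{-1}(u),0)\dif t,\ \phi_0^{-1}(u)\big)$ to the split map $(p,u)\mapsto\big(w(p),\phi_0^{-1}(u)\big)$ -- an admissible homotopy precisely because the second component pins the zero set to $u=u_0$, where the first component is $w(p)$ for every $s$ -- and then invoke the multiplicativity (Cartesian product) property together with the fact that an injective continuous self-map of $\R^n$, in particular the homeomorphism $\phi_0^{-1}$, has degree $\pm1$. Your version buys a cleaner argument that avoids the smooth-approximation step (notably the somewhat delicate claim that the homeomorphism $\phi_0$ can be approximated by a smooth diffeomorphism), at the price of invoking two further standard degree properties not explicitly listed in Section \ref{sec:degree}: the product formula in the extended (non-compact domain) setting, which you correctly justify by excision to a bounded ball, and the $\pm1$ degree of injective maps, which is classical and available in the references you cite; the paper's version stays closer to the bare computation formula. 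Both arguments are sound and yield the sign in \eqref{eq:fdeg} as the orientation sign of $\phi_0^{-1}$ at $u_0$.
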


\begin{proof}
To prove the first assertion, let $u_0\in\R^n$ be the unique point such that $\phi_0^{-1}(u_0)=0$ and observe that
\begin{equation}\label{eq:nuzero}
 \nu^{-1}(0) = \big\{(p,u_0):p\in w^{-1}(0)\big\};
\end{equation}
thus, $w^{-1}(0)\cap W$ is compact if and only if so is $\nu^{-1}(0)\cap(W\times\R^n)$. Hence $\nu$ is admissible in $W\times\R^n$ if and only if so is $w$ in $W$. 

Now, the proof of formula \eqref{eq:fdeg} follows from an application of the properties of the degree.
First, by the excision property, we get
\begin{equation}\label{eq:nu-exc}
 \deg\big(\nu,W_1\times\R^n\big)=\deg\big(\nu,W_1\times W_2\big)
\end{equation}
where $W_1 \subseteq W$ and $W_2\subseteq \R^n$ are open and bounded sets such that $\nu^{-1}(0)\subseteq W_1\times W_2$.
Let now $\nu_1$ be the first component of $\nu$. By known transversality theorems, we can approximate $\nu_1$ with a smooth map $\hat\nu_1$ with only isolated zeros and also approximate $\phi_0$ with a smooth diffeomorphism $\hat\phi_0$. Let
\[
 \hat\nu(p,u)=\big(\hat\nu_1(p,u)\;,\;\hat\phi_0^{-1}(u)\big).
\]
We can choose the approximations so close, in a way that the homotopy
\begin{multline*}
 H(s,p,u)=s\hat\nu(p,u)+(1-s)\nu(p,u) \\
 =\Big( s\hat\nu_1(p,u)+(1-s)\nu_1(p,u)\;,\; s\hat\phi_0^{-1}(u)+(1-s) \phi_0^{-1}(u)\Big),
\end{multline*}
has no zeros on the boundary of the bounded open set $W_1\times W_2$ for $s\in [0,1]$. Thus $H$ is admissible in $W_1\times W_2$, and the homotopy property of the degree yields
\begin{equation}\label{eq:nuhom}
 \deg\big(\nu,W_1\times W_2\big)=\deg\big(\hat\nu,W_1\times W_2\big).
\end{equation}
Similarly, defining $\hat w(p)=\hat\nu_1(p,0)$, we have
\begin{equation}\label{eq:whom}
 \deg(w,W_1)=\deg(\hat w,W_1).
\end{equation}

To conclude, let $(p_0,u_0)$ be an isolated zero of $\hat\nu$. We have
\begin{equation*}
 \det\hat\nu'(p_0,u_0)=\det\begin{pmatrix}
               \D_1\hat\nu_1(p_0,u_0) & 0 \\
               \D_2\hat\nu_1(p_0,u_0) & (\hat\phi_0^{-1})'(u_0)
               \end{pmatrix}
                =\det \hat w'(p_0)\det (\phi_0^{-1})'(u_0).
\end{equation*}
As in \eqref{eq:nuzero} we have 
\[
 \hat\nu^{-1}(0) = \big\{(p,u_0):p\in \hat w^{-1}(0)\big\}.
\]
Also, $\hat\phi_0^{-1}$ being a diffeomorphism,  $\sign\det (\hat\phi_0^{-1})'(u_0)=\pm 1$.
Thus,
\begin{align*}
\deg(\hat\nu,W_1\times W_2) 
  &= \sum_{(p_0,u_0)\in\hat\nu^{-1}(0)\cap W_1\times W_2}\sign\det\hat\nu'(p_0,u_0)\\
  &= \sign\det (\hat\phi_0^{-1})'(u_0) \sum_{p_0\in \hat w^{-1}(0)\cap W_1}\sign\det \hat w'(p_0)\\
  &= \pm \deg(\hat w,W_1).
\end{align*}

The assertion now follows from equations \eqref{eq:whom}, \eqref{eq:nuhom} and \eqref{eq:nu-exc}.
\end{proof}

\begin{proof}[Proof of Theorem \ref{th:main1}]
Recall that the standard $C^1$ topology of $C^1(\R^n)$ is finer than the one induced by the inclusion in $C(\R^n)$. Thus every open set of $C^1(U)$ is open in $C(U)$ as well. By the same argument we have that every open set of $[0,\infty)\times C_T^1(U)$ is also open in $[0,\infty)\times C_T(U)$. Consequently, since $\Omega$ is an open subset of $[0,\infty)\times C^1_T(U)$, the set $\Omega^*:=\Omega\times C_T(\R^n)$ is open in $[0,\infty)\times C_T(U)\times C_T(\R^n)$ that, by abuse of notation, we identify with $[0,\infty)\times C_T(U\times\R^n)$.

 Similarly to $\Omega_{U} = \big\{ p\in U: (0,\cl{p})\in\Omega\big\}$ that is clearly open in $U$, we construct the open set $\Omega^*_{U\times\R^n}\subseteq U\times\R^n$ by
\[
 \Omega^*_{U\times\R^n} = \big\{ (p,q)\in U\times\R^n: (0,\cl{p},\cl{q})\in\Omega^*\big\}= \Omega_{U}\times\R^n.
\]

Lemma \ref{lem:redDeg1} implies that $\deg(w,\Omega_U)$ is well defined and nonzero if and only if so is $\deg(\nu,\Omega^*_{U\times\R^n})$. Thus, by Theorem \ref{th:coupled} applied to \eqref{eq:equiv1} we find a connected set $\Theta$ of nontrivial $T$-triples in $\Omega^*$ of \eqref{eq:equiv1} whose closure in $[0,\infty)\times C_T(U\times\R^n)$ meets $\nu^{-1}(0)\cap\Omega^*_{U\times\R^n}$ and is not contained in any compact subset of $\Omega^*_{U\times\R^n}$. 

According to Lemma \ref{lem:homomT}, $\Gamma=\mathfrak{H}(\Theta)$ is a set of $T$-forced pairs for \eqref{eq:0-pertb-intro} in $\Omega$ such that trivial $T$-triples  of \eqref{eq:equiv1} correspond to trivial $T$-forced pairs. Since $\mathfrak{H}$ is a homeomorphism, the assertion follows.
\end{proof}

\begin{remark}\label{re:extension1}
 The techniques shown so far actually allow for an extension of Theorem \ref{th:main1} to a slight generalization of equation \eqref{eq:0-pertb-intro} as follows:
 \begin{equation}\label{eq:0-pertb-gen}
 \Big[\phi\Big(\lambda,x(t),x'(t)+\lambda k\big(t,x(t)\big)\Big)\Big]'=\lambda f\big(t,x(t),x'(t),\lambda\big),\quad \lambda\geq 0,
\end{equation}
with $\phi$ and $f$ as in \eqref{eq:0-pertb-intro} and $k\colon\R\times U\to\R^n$ Carath\'eodory and $T$-periodic in $t$. In fact, the construction of Section \ref{se:rearrange} that leads to \eqref{eq:equiv1} can be repeated here taking us to the following system equivalent to \eqref{eq:0-pertb-gen}:
\begin{equation}\label{eq:equiv1-gen}
  \left\{
\begin{array}{l}
x'(t)=\phi_0^{-1}\big(y(t)\big)+\lambda H\big(\lambda,x(t),y(t)\big)\\
y'(t)=\lambda f\Big(t,x(t),\psi\big(\lambda,x(t),y(t)\big),\lambda\Big),
\end{array}
\right.
 \end{equation}
 where 
\begin{equation}\label{eq:bigH}
 H\big(\lambda,x(t),y(t)\big)=h(\lambda,x(t),y(t)\big)- k\big(t,x(t)\big).
\end{equation}
The notion of $T$-forced pairs for \eqref{eq:0-pertb-gen} and $T$-triples of \eqref{eq:equiv1-gen} are the obvious counterparts of the analogous concepts for \eqref{eq:0-pertb-intro} and \eqref{eq:equiv1}. Clearly, as in Lemma \ref{lem:homomT} one finds a homeomorphism between the set of $T$-forced pairs of \eqref{eq:0-pertb-gen} and the $T$-triples of \eqref{eq:equiv1-gen}. Observe also that the perturbing term on the first equation of \eqref{eq:equiv1-gen} do not play any actual r\^ole in  the proof of Theorem \ref{th:main1}, thus it can be immediately checked that, when $\deg(w,\Omega_U)\neq 0$, the same argument of the theorem yields the existence of a connected set $\Gamma$ of nontrivial $T$-forced pairs in $\Omega$ of \eqref{eq:0-pertb-gen} whose closure in $[0,\infty)\times C^1_T(U)$ intersects the set $\big\{ (0,\cl{p})\in [0,\infty) \times C^1_T(U): p\in w^{-1}(0)\cap\Omega_U\big\}$ and is not contained in any compact subset of $\Omega$. In particular, when $U=\R^n$ and $\Omega=[0,\infty)\times C^1_T(\R^n)$ then $\Gamma$ is unbounded. 
\end{remark}

\subsection{Proof of Theorem \ref{th:main2}.}\label{se:proofThm2}

 As in the previous Section \ref{se:proofThm1}, we relate the set $\mathfrak{X} \subseteq [0,\infty)\times C_T^1(U)$ of $T$-forced pairs of \eqref{eq:g-pertb-intro} with the set $\mathfrak{Y}\subseteq [0,\infty)\times C_T(U\times\R^n)$ of $T$-triples of \eqref{eq:equiv2}. To do so, define the map $\mathfrak{G}\colon \mathfrak{Y}\to \mathfrak{X}$ by $\mathfrak{G}\colon(\lambda,x,y)\mapsto (\lambda,x)$. A result analogous to Lemma \ref{lem:homomT} holds in this case too. Since it follows from the same argument as Lemma \ref{lem:homomT} we omit the proof.

 \begin{lemma}\label{lem:homomTg}
  $\mathfrak{G}$ is a homeomorphisms that respects the notion of triviality, in the sense that it makes 
  trivial $T$-triples correspond to trivial $T$-forced pairs and vice versa.
 \end{lemma}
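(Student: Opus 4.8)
The plan is to mirror the proof of Lemma~\ref{lem:homomT} with only cosmetic changes, since the structural role of $g$ in system \eqref{eq:equiv2} is innocuous for the correspondence between forced pairs and triples. First I would take a $T$-triple $(\lambda,x,y)$ of \eqref{eq:equiv2}: by the equivalence established in Section~\ref{se:rearrange}, the first equation forces $y(t)=\phi\big(\lambda,x(t),x'(t)\big)$ and then $x$ is a $T$-periodic solution of \eqref{eq:g-pertb-intro} corresponding to $\lambda$, so that $(\lambda,x)\in\mathfrak{X}$; moreover, if $(\lambda,x,y)$ is trivial then $\lambda=0$ and $(x,y)$ is constant, in particular $x$ is constant, whence $(\lambda,x)$ is a trivial $T$-forced pair of \eqref{eq:g-pertb-intro}.

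Conversely, given a $T$-forced pair $(\lambda,x)$ of \eqref{eq:g-pertb-intro}, by Remark~\ref{rem:regular} the function $x$ is $C^1$ and, by continuity of $\phi$, the map $t\mapsto y(t)=\phi\big(\lambda,x(t),x'(t)\big)$ is continuous and $T$-periodic; by Definition~\ref{def:sol} this $y$ lies in $W^{1,1}_\mathrm{loc}$ and $(x,y)$ satisfies \eqref{eq:sysequiv2}, hence \eqref{eq:equiv2}, so $(\lambda,x,y)\in\mathfrak{Y}$. This shows that $\mathfrak{G}^{-1}(\lambda,x)=\big(\lambda,x,\phi(\lambda,x,x')\big)$ is well-defined, and the continuity of $\phi$ (together with the fact that the map $x\mapsto x'$ is continuous from $C^1_T$ to $C_T$) yields continuity of $\mathfrak{G}^{-1}\colon\mathfrak{X}\to\mathfrak{Y}$. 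Since $\mathfrak{G}$ is trivially continuous, being a restriction of the projection $(\lambda,x,y)\mapsto(\lambda,x)$, it follows that $\mathfrak{G}$ is a homeomorphism. Finally, if $x$ is constant then $x'\equiv0$ and hence $y=\phi(\lambda,x,0)$ is constant too, so $\mathfrak{G}^{-1}$ sends trivial $T$-forced pairs to trivial $T$-triples; combined with the first paragraph this gives the claimed respect for triviality.

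There is no real obstacle here: the only point that requires a modicum of care is checking that the presence of the term $g\big(x(t),\phi_0^{-1}(y(t))\big)$ in the second equation of \eqref{eq:equiv2} does not interfere with the argument — but it does not, since the correspondence $(\lambda,x)\leftrightarrow(\lambda,x,y)$ is governed entirely by the first equation $x'=\psi(\lambda,x,y)$ (equivalently $y=\phi(\lambda,x,x')$), exactly as in Lemma~\ref{lem:homomT}. For this reason the proof is omitted, as stated.
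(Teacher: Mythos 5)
Your argument is essentially the paper's: the paper omits the proof of this lemma precisely because it is the proof of Lemma~\ref{lem:homomT} repeated verbatim, and your two paragraphs reproduce that proof (the correspondence $(\lambda,x)\leftrightarrow(\lambda,x,y)$ with $y=\phi(\lambda,x,x')$, Remark~\ref{rem:regular} for the $C^1$ regularity, and the observation that $x$ constant forces $y$ constant), correctly noting that the term $g\big(x,\phi_0^{-1}(y)\big)$ plays no role in the correspondence.

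One small imprecision is worth fixing: the continuity of $\mathfrak{G}$ is not literally ``trivial, being a restriction of the projection''. The set $\mathfrak{Y}$ carries the topology of $[0,\infty)\times C_T(U\times\R^n)$ (uniform convergence), while the target $\mathfrak{X}$ sits in $[0,\infty)\times C^1_T(U)$ with the finer $C^1$ topology, and the projection $(\lambda,x,y)\mapsto(\lambda,x)$ is not continuous into $C^1_T(U)$ on the whole product space. It is continuous on the set of $T$-triples because there $x'=\psi(\lambda,x,y)$, so uniform convergence of $(\lambda_n,x_n,y_n)$ together with the continuity of $\psi$ (and compactness of $[0,T]$ plus the ranges) gives uniform convergence of $x_n'$. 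This is the same relation you invoke, in the reverse direction, for the continuity of $\mathfrak{G}^{-1}$; adding one sentence to this effect closes the only gap in your write-up (the paper itself asserts the continuity of $\mathfrak{H}$, and hence of $\mathfrak{G}$, without spelling this out).
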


 Unlike Section \ref{se:proofThm1} where the assertion of Theorem  \ref{th:main1} was deduced from Theorem~\ref{th:coupled} applied to \eqref{eq:equiv1}, here we apply Theorem \ref{th:g-single} to \eqref{eq:equiv2}. To do so, we introduce the vector field $G$ defined on $U\times \R^{n}$ given by
 \[
G(p,u)=\Big(\phi_0^{-1}(u)\;,\; g\big(p,\phi_0^{-1}(u)\big)\Big).
\]
As in Section \ref{se:proofThm1}, since $\phi_0^{-1}$ is a homeomorphism, the degree of $G$ reduces up to its sign, to the degree of the simpler vector field $\gamma\colon U\to\R^n$ given by
\begin{equation}\label{def:omega}
 \gamma(p):=g(p,0).
\end{equation}
In other words, we have

\begin{lemma}\label{lem:redDeg2}
 Let $W\subseteq U$ be open. The vector field $G$ is admissible for the degree in $W\times\R^n$ if and only if so is $\gamma$ in $W$, and 
\begin{equation}\label{eq:fdeg1}
  \deg\big(G,W\times\R^n\big)=\pm \deg\big(\gamma,W\big).
\end{equation}
\end{lemma}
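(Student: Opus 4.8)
The plan is to mirror the proof of Lemma \ref{lem:redDeg1} almost verbatim, since $G$ has the same triangular structure as $\nu$: its first component $\phi_0^{-1}(u)$ depends only on $u$ through the homeomorphism $\phi_0^{-1}$, and its second component $g(p,\phi_0^{-1}(u))$ reduces, at the relevant zeros, to $\gamma(p)=g(p,0)$.

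\medskip

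First I would establish the admissibility equivalence. Let $u_0\in\R^n$ be the unique point with $\phi_0^{-1}(u_0)=0$. Since $\phi_0^{-1}$ is a homeomorphism, $G(p,u)=0$ forces $u=u_0$, and then the second component becomes $g(p,0)=\gamma(p)$. Hence
\[
 G^{-1}(0)=\big\{(p,u_0):p\in\gamma^{-1}(0)\big\},
\]
so $\gamma^{-1}(0)\cap W$ is compact if and only if $G^{-1}(0)\cap(W\times\R^n)$ is compact, which gives the first assertion. Next, by excision, $\deg(G,W\times\R^n)=\deg(G,W_1\times W_2)$ for bounded open $W_1\subseteq W$, $W_2\subseteq\R^n$ chosen so that $G^{-1}(0)\subseteq W_1\times W_2$. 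Then I would smooth things: approximate $\phi_0$ by a smooth diffeomorphism $\hat\phi_0$ and the continuous map $(p,u)\mapsto g(p,\phi_0^{-1}(u))$ by a smooth map $\hat g_2$ with isolated zeros, chosen close enough that the straight-line homotopy between $G$ and $\hat G(p,u)=\big(\hat\phi_0^{-1}(u),\hat g_2(p,u)\big)$ is zero-free on $\partial(W_1\times W_2)$; homotopy invariance then yields $\deg(G,W_1\times W_2)=\deg(\hat G,W_1\times W_2)$, and correspondingly $\deg(\gamma,W_1)=\deg(\hat\gamma,W_1)$ with $\hat\gamma(p):=\hat g_2(p,u_0)$ — here one should be a little careful and instead take the approximation so that $\hat\gamma(p)$ is exactly $\hat g_2$ evaluated along the preimage of $0$, or simply approximate $\gamma$ separately and absorb the discrepancy into the closeness of the approximation, just as in Lemma \ref{lem:redDeg1}.

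\medskip

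Finally, for an isolated zero $(p_0,u_0)$ of $\hat G$, the Jacobian is block triangular,
\[
 \det\hat G'(p_0,u_0)=\det\begin{pmatrix}
 0 & (\hat\phi_0^{-1})'(u_0)\\
 \partial_1\hat g_2(p_0,u_0) & \partial_2\hat g_2(p_0,u_0)
 \end{pmatrix}
 =\pm\det\big((\hat\phi_0^{-1})'(u_0)\big)\,\det\hat\gamma'(p_0),
\]
the sign $\pm$ coming from the row/column swap of the block decomposition; and $\sign\det(\hat\phi_0^{-1})'(u_0)=\pm1$ is a fixed sign since $\hat\phi_0^{-1}$ is a diffeomorphism. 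Summing over $\hat G^{-1}(0)\cap(W_1\times W_2)=\{(p_0,u_0):p_0\in\hat\gamma^{-1}(0)\cap W_1\}$ via the Computation Formula \eqref{eq:comp} gives $\deg(\hat G,W_1\times W_2)=\pm\deg(\hat\gamma,W_1)$, and combining with the excision and homotopy equalities yields \eqref{eq:fdeg1}.

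\medskip

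The argument is essentially routine given Lemma \ref{lem:redDeg1}; the only mild subtlety — and the step I would be most careful about — is the simultaneous smooth approximation of $\phi_0$ and of $g(\cdot,\phi_0^{-1}(\cdot))$ in a way that keeps the homotopy admissible and keeps the zero set in the expected triangular form, together with tracking the order of the blocks so that the ambiguous sign $\pm$ (which is all that is claimed) is correctly accounted for. No genuinely new idea beyond the proof of Lemma \ref{lem:redDeg1} is needed.
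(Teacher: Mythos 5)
Your proposal is correct and follows exactly the route the paper intends: the paper omits the proof of Lemma \ref{lem:redDeg2}, stating it is carried out as in Lemma \ref{lem:redDeg1}, and your argument is precisely that adaptation (zero set $\{(p,u_0):p\in\gamma^{-1}(0)\}$, excision to a bounded product, smooth approximation plus straight-line homotopy, and the block-triangular Jacobian giving the sign $\pm$). The only caveats you raise (the approximate zero of $\hat\phi_0^{-1}$ versus $u_0$, and defining $\hat\gamma$ consistently) are handled the same way as in the paper's proof of Lemma \ref{lem:redDeg1}, so nothing further is needed.
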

The proof of this assertion can be carried out as in Lemma \ref{lem:redDeg1} and therefore it is omitted.

The argument for proving Theorem \ref{th:main2} proceeds in a similar way to that of Theorem \ref{th:main1} performed in Section \ref{se:proofThm1}.

\begin{proof}[Proof of Theorem \ref{th:main2}]
As pointed out in the proof of Theorem \ref{th:main1}, the fact that $\Omega\subseteq[0,\infty)\times C^1_T(U)$ is open implies that the sets
\begin{align*}
&\Omega_{U} = \big\{ p\in U: (0,\cl{p})\in\Omega\big\}\subseteq U \subseteq \R^n,\\
&\Omega^*=\Omega\times C_T(\R^n)\subseteq[0,\infty)\times C_T(U\times\R^n),
\end{align*}
are also open in their respective spaces. Consider also the open set $\Omega^*_{U\times\R^n}\subseteq U\times\R^n$ given by
\[
 \Omega^*_{U\times\R^n} = \big\{ (p,q)\in U\times\R^n: (0,\cl{p},\cl{q})\in\Omega^*\big\}= \Omega_{U}\times\R^n.
\]

By Lemma \ref{lem:redDeg2} it follows that, if $\deg(\gamma,\Omega_U)$ is well defined and nonzero, then so is $\deg(G,\Omega^*_{U\times\R^n})$. Hence, Theorem \ref{th:g-single} applied to \eqref{eq:equiv2} yields the existence of a connected set, say $\Theta$, of nontrivial $T$-triples in $\Omega^*$ of \eqref{eq:equiv2} whose closure in $[0,\infty)\times C_T(U\times\R^n)$ meets $G^{-1}(0)\cap\Omega^*_{U\times\R^n}$ and is not contained in any compact subset of $\Omega^*_{U\times\R^n}$. By Lemma \ref{lem:homomTg}, we get that $\Gamma=\mathfrak{G}(\Theta)$ is a set of $T$-forced pairs of \eqref{eq:g-pertb-intro} in $\Omega$ since $\mathfrak{G}$ is a homeomorphism.
As above, observe that a $T$-triple $(0,x,y)$ of \eqref{eq:equiv2} is trivial if and only if $(0,x)$ is a trivial $T$-pair of \eqref{eq:g-pertb-intro}. The assertion follows.
\end{proof}

As already pointed out, this theorem cannot be deduced directly from Theorem~\ref{th:main1}. However, as in Remark \ref{re:extension1}, an extension of Theorem \ref{th:main2} to the following generalization of Equation \eqref{eq:g-pertb-intro} is possible:
\begin{equation}\label{eq:g-pertb-gen}
 \Big[\phi\Big(\lambda,x(t),x'(t)+\lambda k\big(t,x(t)\big)\Big)\Big]'\!\!=g\big(x(t),x'(t)\big)+\lambda f\big(t,x(t),x'(t),\lambda\big),\; \lambda\geq 0,
\end{equation}
with $\phi$, $f$ and $g$ as in \eqref{eq:0-pertb-intro} and $k\colon\R\times U\to\R^n$ Carath\'eodory and $T$-periodic in $t$.

\begin{remark}\label{re:extension2}
 The approach used for the proof of Theorem \ref{th:main2} can be applied also to equation \eqref{eq:g-pertb-gen}. In fact, the method used in Section \ref{se:rearrange} that brings to \eqref{eq:equiv2} takes \eqref{eq:0-pertb-gen} to the following equivalent system:
\begin{equation}\label{eq:equiv2-gen}
  \left\{\begin{array}{l}
         x'(t)=\phi_0^{-1}\big(y(t)\big)+\lambda H\big(\lambda,x(t),y(t)\big),\\
         y'(t)=g\Big(x(t),\phi_0^{-1}\big(y(t)\big)\Big)+\lambda F\big(t,x(t),y(t),\lambda\big),
         \end{array}
         \right.
 \end{equation}
where $F$ is given by \eqref{eq:bigF} and $H$ by \eqref{eq:bigH}. Define
the notions of $T$-forced pairs for \eqref{eq:g-pertb-gen} and $T$-triples of \eqref{eq:equiv2-gen} as the obvious parallels for the analogous concepts for \eqref{eq:g-pertb-intro} and \eqref{eq:equiv2}. Also, as in Lemma \ref{lem:homomTg} one has a homeomorphism between the set of $T$-forced pairs of \eqref{eq:g-pertb-gen} and the $T$-triples of \eqref{eq:equiv2-gen}.  Similarly to Remark \ref{re:extension1} the same argument of Theorem \ref{th:main2}, when $\deg(\gamma,\Omega_U)\neq 0$, implies the existence of a connected set $\Gamma$ of nontrivial $T$-forced pairs in $\Omega$ of \eqref{eq:g-pertb-gen} whose closure in $[0,\infty)\times C^1_T(U)$ intersects the set $\big\{ (0,\cl{p})\in [0,\infty) \times C^1_T(U): p\in\gamma^{-1}(0)\cap\Omega_U\big\}$ and is not contained in any compact subset of $\Omega$. In particular, when $U=\R^n$ and $\Omega=[0,\infty)\times C^1_T(\R^n)$ then $\Gamma$ is unbounded. 
\end{remark}

\section{Visual representation of branches} \label{sec:graphs}
Note that the connected sets of nontrivial $T$-forced pairs obtained in Theorems~\ref{th:main1} and~\ref{th:main2} are abstract, hard to visualize concepts, being naturally contained in $[0,\infty)\times C_T^1(\R^n)$.

In this section we outline a method to represent graphically the set of $T$-forced pairs of \eqref{eq:0-pertb-intro} and \eqref{eq:g-pertb-intro}. Our approach here, as in \cite{BiSp15} and \cite{CaPeSp23}, consists in obtaining a homeomorphic finite dimensional image $\Sigma$ of the set $\Gamma$ yielded by either of the above theorems, plot it when possible, and show a graph of some pertinent functions of the points of $\Gamma$; reasonable choices for second-order equations, as illustrated by the examples below, are the $C^1$-norm of the solution $x$ in any $T$-forced pair $(\lambda,x)$ and the diameter (measured with the Euclidean norm) of the orbit in the phase space, namely
\[
 \mathrm{diam}(x)=\max_{t,\tau\in[0,T]}\Big(\left|x(t)-x(\tau)\right|^2+\left|x'(t)-x'(\tau)\right|^2\Big)^{1/2}
 \]
Clearly, depending on the the feature of $\Gamma$ one wishes to highlight, one may elect to show other functions as, for instance, some projections of $\Sigma$.
 
Below, we assume $g$, $\phi$ and $f$ to be at least Lipschitz continuous, so that uniqueness of solutions and continuous dependence on initial data of \eqref{eq:0-pertb-intro} and \eqref{eq:g-pertb-intro} are granted. Furthermore, in order to keep this section reasonably short, we only focus on the scalar case ($n=1$) and on equations of the form \eqref{eq:g-pertb-intro}.

Consider, for $\lambda\geq 0$, the initial value problem
\begin{equation}\label{eq:iv-gprtb}
 \left\{\begin{array}{l}
         [\phi\big(\lambda,x(t),x'(t)\big)]'=g\big(x(t),x'(t)\big)+\lambda f\big(t,x(t),x'(t),\lambda\big),\\
         x(0)=p,\\
         x'(0)=v.
        \end{array}
\right. 
\end{equation}
and define the set
\[
 S=\left\{ (\lambda,p,v)\in[0,\infty)\times\R^{2}\;\;\left|\;\parbox{0.38\linewidth}{\eqref{eq:iv-gprtb} has a $T$-periodic solution}\right.\right\}
\]
The elements of $S$ are called \emph{starting points} for \eqref{eq:g-pertb-intro}. In other words, $S$ consists of those triples $(\lambda,p,v)$ such that $(p,v)$ are the initial conditions, at time $t=0$, of a $T$-periodic solution of \eqref{eq:g-pertb-intro} corresponding to $\lambda$. A starting point $(\lambda,p,v)$ is called \emph{trivial} when $\lambda=0$ and the solution of \eqref{eq:g-pertb-intro} starting a time $t=0$ from $(p,v)$ is constant. Clearly, $(0,p,v)$ is a trivial starting point if and only if $v=0$ and $g(p,0)=0$.

As in the previous section, denote by $\mathfrak{X}$ the set of $T$-forced pairs of \eqref{eq:g-pertb-intro}. By uniqueness and continuous dependence on initial data the map $\mathfrak{p}\colon \mathfrak{X}\to S$ given by
\[
(\lambda,x)\mapsto\big(\lambda,x(0),x'(0)\big)
\]
is a homeomorphism that establishes a correspondence between trivial $T$-forced pairs and trivial starting points. In other words, $\Sigma:=\mathfrak{p}(\Gamma)\subseteq [0,\infty)\times\R^{2}$ is the desired homemorphic image of $\Gamma$.
 
\begin{figure}[ht!]
 \begin{tabular}{@{}cc@{}}
\subfigure[$C^1$-norm of $x$ for $(\lambda,x)\in\Gamma$ and $\lambda\in {[0,1.45]}$ and diameter of the orbit in phase space.\label{fig:1a}]{\includegraphics[width=0.45\linewidth]{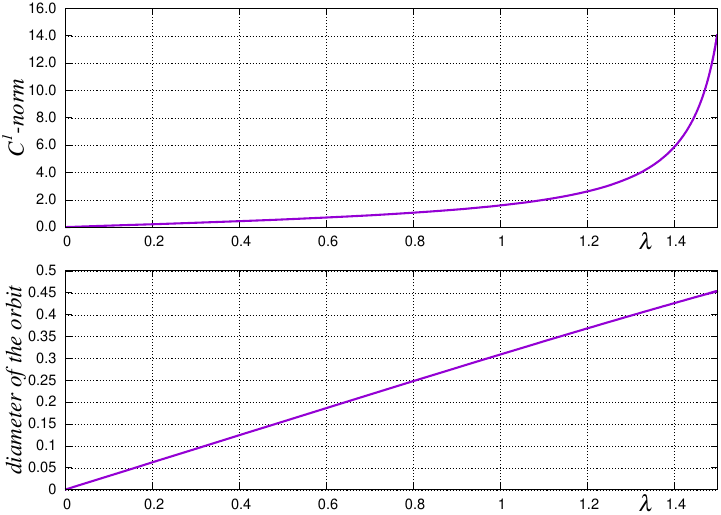}} &
 \subfigure[Starting points $(\lambda,p,v)\in\Sigma$ for $\lambda\in {[0,1.45]}$.\label{fig:1b}]{\includegraphics[width=0.53\linewidth]{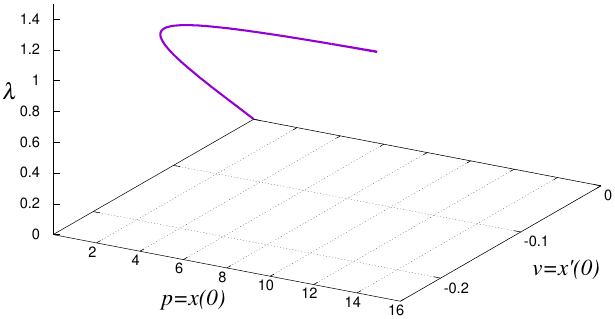}}
 \end{tabular}
\caption{Representation of a portion of $\Gamma$ of Example \ref{ex:1}}\label{fig:1}
\end{figure}

\begin{example}\label{ex:1}
Consider the following scalar equation:
\begin{equation}\label{eq:example1}
 \big[x'(t)^3+x'(t)\big]'=\arctan\big(x(t)\big)+ \lambda \big(\sin(2\pi t)-1\big),
\end{equation}
with $\lambda\geq0$. Here, $T=1$, $g(p,v)=\arctan(p)$, $\phi(\lambda,p,v)=v^3+v$ and $f(t,p,v)=\sin(2\pi t)-1$, so that
$\gamma(p)=g(p,0)=\arctan(p)$. Take $U=\R$ and $\Omega=[0,\infty)\times C^1_T(\R)$, and observe that $\deg(\gamma,\Omega_U)=\deg(\gamma,\R)=1$. Thus, Theorem \ref{th:g-single} yields an unbounded connected set $\Gamma$ of nontrivial $T$-forced pairs whose closure in $[0,\infty)\times C^1_T(\R)$ contains $(0,\cl{0})$.

Figure \ref{fig:1a} shows the $C^1$-norm and orbit diameter in the phase plane of the solutions of the $T$-forced pairs of \eqref{eq:example1} when $\lambda$ varies in $[0,1]$. Figure \ref{fig:1b}, instead, shows the starting points of $\Sigma$ for $\lambda$ in the longer interval $[0,1.6]$. Taken together, the images in Figure \ref{fig:1} seem to suggest that, as $\lambda$ grows, the initial point moves away rapidly from the origin whereas the speed and diameter of the orbit do not grow as fast.
\end{example}

\begin{figure}[ht!]
 \begin{tabular}{@{}cc@{}}
\subfigure[$C^1$-norm of $x$ for $(\lambda,x)\in\Gamma_0\cup\Gamma_1$ and diameter of the orbit in phase space.\label{fig:2a}]{\includegraphics[width=0.45\linewidth]{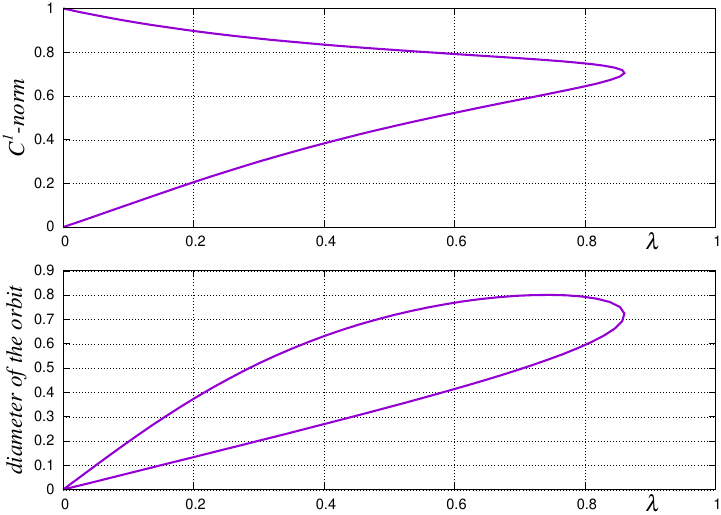}} &
 \subfigure[Starting points $(\lambda,p,v)\in\Sigma_0\cup\Sigma_1$.\label{fig:2b}]{\includegraphics[width=0.53\linewidth]{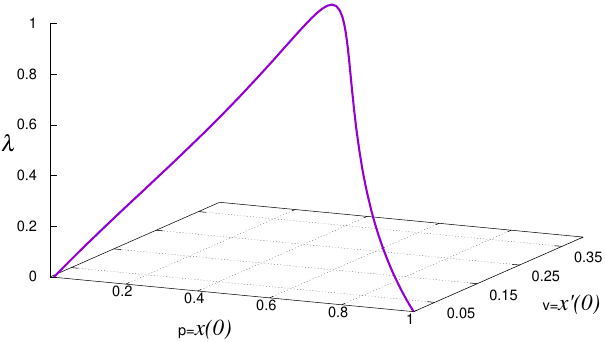}}
 \end{tabular}
\caption{Representation of $\Gamma_0$ and $\Gamma_1$ of Example \ref{ex:2}}\label{fig:2}
\end{figure}

In the following example the behavior of the connected set of $T$-forced pairs is quite different.

\begin{example}\label{ex:2}
 Consider the following scalar equation:
\begin{equation}\label{eq:example2}
 \big[x'(t)^3+2x'(t)\big]'=x(t)^2-x(t)- \lambda \big(\sin(t)-2x(t)^2\big),
\end{equation}
with $\lambda\geq0$. Here $\phi(\lambda,p,v)=v^3+2v$, $T=2\pi$, $g(p,v)=p^2-p$ and $f(t,p,v)=2p^2-\sin (t)$; so that $\gamma(p)=g(p,0)=p^2-2p$. Let $U_0=\R\setminus\{0\}$ and $U_1=\R\setminus\{1\}$ and consider the following open subsets of $[0,\infty)\times C^1_T(\R)$:
\begin{equation*}
 \mathcal{A} =[0,\infty)\times C^1_T(U_0),\qquad
 \mathcal{B} =[0,\infty)\times C^1_T(U_1).
\end{equation*}
 Since $\deg(\gamma,\mathcal{A}_{U_0})=-1$ and $\deg(\gamma,\mathcal{B}_{U_1})=1$, Theorem \ref{th:g-single} yields two connected sets $\Gamma_0$ and $\Gamma_1$ of nontrivial $T$-forced pairs of \eqref{eq:example2} whose closure in $[0,\infty)\times C^1_T(U_0)$ and $[0,\infty)\times C^1_T(U_1)$ contains $(0,\cl{0})$ and $(0,\cl{1})$, respectively. Here, as above, $\cl{0}$ and $\cl{1}$ denote, respectively, the constant functions $t\mapsto 0$ and $t\mapsto 1$. Let $\Sigma_0=\mathfrak{p}(\Gamma_0)$ and $\Sigma_1=\mathfrak{p}(\Gamma_1)$.
 In Figure \ref{fig:2} we plot, again, the $C^1$-norm and orbit diameter of the solutions and the starting points.

 Figure \ref{fig:2} suggests that $\Sigma_0=\Sigma_1$, so that $\Gamma_0=\Gamma_1$ and that they are bounded. This is not in contradiction with Theorem \ref{th:g-single} as $\deg(\gamma,\R\setminus\{0,1\})=0$.
 
 Observe also that Figure \ref{fig:2} suggests that there exists $\lambda_0\in (0.8,1)$ such that \eqref{eq:example2} admits two $T$-periodic solutions for $\lambda \in(0,\lambda_0)$ and has no $T$-periodic solutions for $\lambda\in (\lambda_0,1)$.
\end{example}

In the following example the set of $T$-forced pairs contains two unbounded connected subsets. The numerical experiment indicates that their qualitative behavior is different with respect to the projection along the $\lambda$ axis.

\begin{figure}[ht!]
 \begin{tabular}{@{}cc@{}}
\subfigure[$C^1$-norm of $x$ for $(\lambda,x)\in\Gamma_{-1}$ and diameter of the orbit in phase space.\label{fig:3a}]{\includegraphics[width=0.48\linewidth]{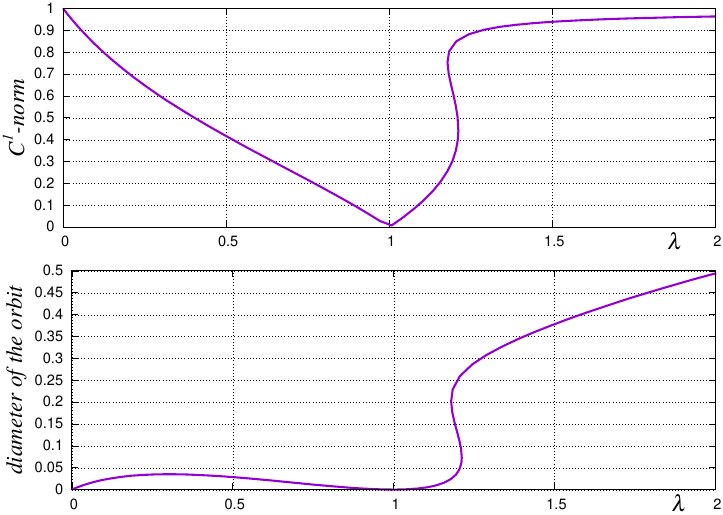}} &
 \subfigure[$C^1$-norm of $x$ for $(\lambda,x)\in\Gamma_{+1}$ and diameter of the orbit in phase space.\label{fig:3b}]{\includegraphics[width=0.48\linewidth]{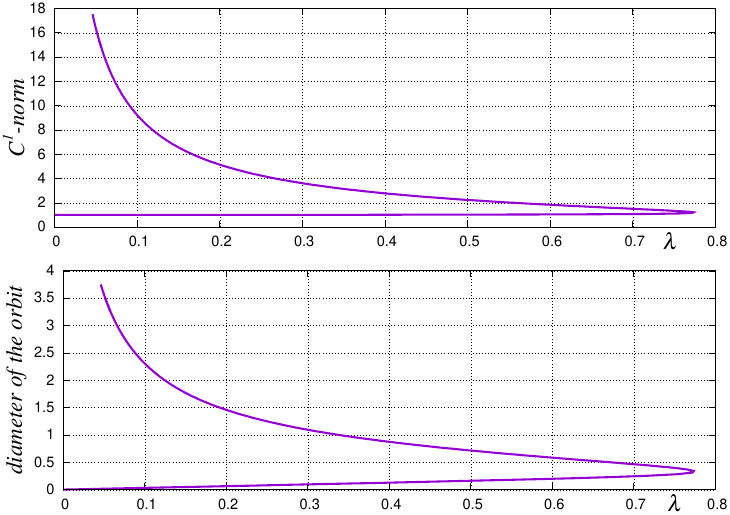}}
 \end{tabular}
\caption{$C^1$-norm and orbit diameter of points in $\Gamma_{-1}$ and $\Gamma_1$ of Example \ref{ex:3}}\label{fig:3}
\end{figure}

\begin{figure}[h!]
 \includegraphics[width=0.85\linewidth]{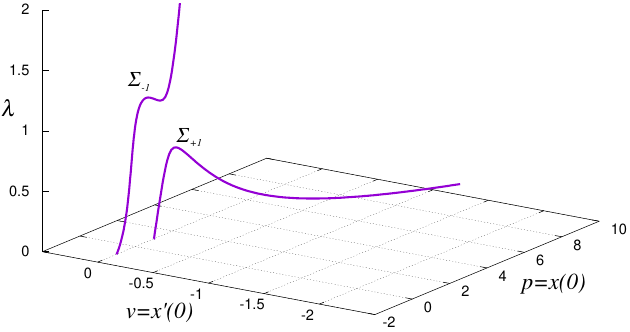}
 \caption{A portion of the set of starting points $(\lambda,p,v)\in\Sigma_{-1}\cup\Sigma_{+1}$ of equation \eqref{eq:example3} in Example \ref{ex:3}.\label{fig:4}}
\end{figure}

\begin{example}\label{ex:3}
  Consider the following scalar equation:
\begin{equation}\label{eq:example3}
 \big[\lambda x'(t)^3+x'(t)\big]'=\frac{x(t)^2-1}{x(t)^2+1}+ \lambda \big(x(t)^2\sin(2\pi t)+1-x(t)\big),
\end{equation}
with $\lambda\geq0$. In this example $\phi(\lambda,p,v)=\lambda v^3+v$, $T=1$, $g(p,v)=\frac{p^2-1}{p^2+1}$ and $f(t,p,v)=p^2\sin(2\pi t)+1-p$. Let $U_{-1}=\R\setminus\{-1\}$ and $U_1=\R\setminus\{1\}$ and, as in Example \ref{ex:2}, consider the following open subsets of $[0,\infty)\times C^1_T(\R)$:
\begin{equation*}
 \mathcal{A} =[0,\infty)\times C^1_T(U_{-1}),\qquad
 \mathcal{B} =[0,\infty)\times C^1_T(U_1).
\end{equation*}
We have that $\deg(\gamma,\mathcal{A}_{U_{-1}})=-1$ and $\deg(\gamma,\mathcal{B}_{U_1})=1$. Hence, Theorem \ref{th:g-single} yields two connected sets $\Gamma_{-1}$ and $\Gamma_{+1}$ of nontrivial $T$-forced pairs whose closure in $[0,\infty)\times C^1_T(U_{-1})$ and $[0,\infty)\times C^1_T(U_{+1})$ contains $(0,\cl{-1})$ and $(0,\cl{1})$, respectively. The figures \ref{fig:3} and \ref{fig:4} seem to suggest that $\Gamma_{-1}$ and $\Gamma_{+1}$ do not intersect and are unbounded. In particular, it seems that the set $\{\lambda:(\lambda,x)\in\Gamma_{+1}\}$ is bounded from above. Figure \ref{fig:4}, in particular shows a portion (in particular with $\lambda\leq 2$) of the sets of starting points in $\Sigma_{-1}=\mathfrak{p}(\Gamma_{-1})$ and $\Sigma_{+1}=\mathfrak{p}(\Gamma_{+1})$.

Notice that according to the figures \ref{fig:3} and \ref{fig:4}, there are intervals $I_1\subset (0,0.8)$ and $I_2\subset(1,1.5)$, such that when $\lambda\in I_1$, Equation \eqref{eq:example3} admits three $T$-periodic solutions, one whose starting point lies in $\Sigma_{-1}$ and two in $\Sigma_{+1}$, and for $\lambda\in I_2$, there are two $T$-periodic solutions, both with starting points in $\Sigma_{-1}$. There is just one $T$-periodic solution for  $\lambda\in [0,\infty)\setminus I_1\cup I_2$, and has starting point in $\Sigma_{-1}$.   
\end{example}

\begin{remark}\label{re:finale}
 The examples above may have given the false impression that the set $\Sigma=\mathfrak{p}(\Gamma)$ is always a ``nice'' smooth $1$-dimensional manifold. Although this can be true quite often, it is false in general as the set of starting points may look rather ``wild''. Consider, for instance, the following equation:
 \begin{equation}\label{eq:example4}
 \big[\frac{1}{3}x'(t)^3+2x'(t)\big]'=x(t)^2-x(t)+ \lambda \big(\sin(2\pi t)-x(t)^2\big),\quad\lambda\geq 0,
 \end{equation}
which is not very different from \eqref{eq:example2} but with $T=1$. Figure \ref{fig:5} shows a portion of the set of starting points of \eqref{eq:example4}.
\end{remark}

\begin{figure}[h!]
 \includegraphics[width=0.85\linewidth]{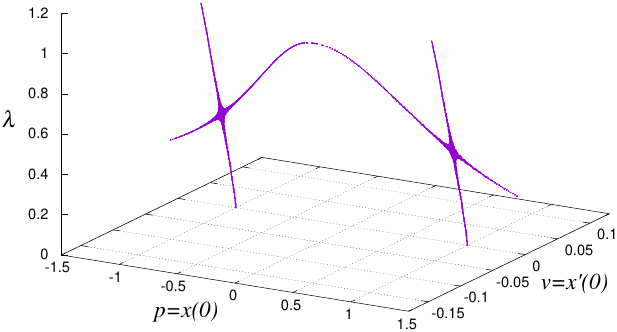}
 \caption{A portion of the set of starting points $(\lambda,p,v)\in\Sigma\cap\big([-1.5,1.5]\times[-0.2,0.15]\times[0,1.1]\big)$ of equation \eqref{eq:example4} in Remark \ref{re:finale}.\label{fig:5}}
\end{figure}
 
 We finally observe that, as suggested by the numerical examples presented above, the number of $T$-periodic solutions associated to particular choices of the parameter may vary in a nontrivial way with $\lambda$ and it may be subject to abrupt transitions when $\lambda$ crosses specific values.

\end{document}